\newcolumntype{Z}{>{\centering\let\newline\\\arraybackslash\hspace{0pt}}X}
\DeclareSymbolFont{cmsymbols}{OMS}{cmsy}{m}{n}
\DeclareSymbolFontAlphabet{\mathcal}{cmsymbols}
\newcommand{\mat}[1]{\bm{#1}}
\newcommand{\ten}[1]{\bm{\mathcal{#1}}}
\newcommand{\core}[2]{\bm{\mathcal{#1}}^{(#2)}}
\newcommand{\argmin}[1]{\underset{#1}{\operatorname{argmin}}}
\newcommand{\inR}[1]{\in\mathbb{R}^{#1}}
\DeclareMathOperator{\vect}{vec}
\DeclareMathOperator{\reshape}{reshape}
\DeclareMathOperator{\spl}{unfold}
\DeclareMathOperator{\svd}{SVD}
\DeclareMathOperator{\rank}{rank}
\let\save@mathaccent\mathaccent
\newcommand*\if@single[3]{%
    \setbox0\hbox{${\mathaccent"0362{#1}}^H$}%
    \setbox2\hbox{${\mathaccent"0362{\kern0pt#1}}^H$}%
    \ifdim\ht0=\ht2 #3\else #2\fi
}
\newcommand*\rel@kern[1]{\kern#1\dimexpr\macc@kerna}
\newcommand*\widebar[1]{\@ifnextchar^{{\wide@bar{#1}{0}}}{\wide@bar{#1}{1}}}
\newcommand*\wide@bar[2]{\if@single{#1}{\wide@bar@{#1}{#2}{1}}{\wide@bar@{#1}{#2}{2}}}
\newcommand*\wide@bar@[3]{%
    \begingroup
    \def\mathaccent##1##2{%
        \let\mathaccent\save@mathaccent
        \if#32 \let\macc@nucleus\first@char \fi
        \setbox\z@\hbox{$\macc@style{\macc@nucleus}_{}$}%
        \setbox\tw@\hbox{$\macc@style{\macc@nucleus}{}_{}$}%
        \dimen@\wd\tw@
        \advance\dimen@-\wd\z@
        \divide\dimen@ 3
        \@tempdima\wd\tw@
        \advance\@tempdima-\scriptspace
        \divide\@tempdima 10
        \advance\dimen@-\@tempdima
        \ifdim\dimen@>\z@ \dimen@0pt\fi
        \rel@kern{0.6}\kern-\dimen@
        \if#31
        \overline{\rel@kern{-0.6}\kern\dimen@\macc@nucleus\rel@kern{0.4}\kern\dimen@}%
        \advance\dimen@0.4\dimexpr\macc@kerna
        \let\final@kern#2%
        \ifdim\dimen@<\z@ \let\final@kern1\fi
        \if\final@kern1 \kern-\dimen@\fi
        \else
        \overline{\rel@kern{-0.6}\kern\dimen@#1}%
        \fi
    }%
    \macc@depth\@ne
    \let\math@bgroup\@empty \let\math@egroup\macc@set@skewchar
    \mathsurround\z@ \frozen@everymath{\mathgroup\macc@group\relax}%
    \macc@set@skewchar\relax
    \let\mathaccentV\macc@nested@a
    \if#31
    \macc@nested@a\relax111{#1}%
    \else
    \def\gobble@till@marker##1\endmarker{}%
    \futurelet\first@char\gobble@till@marker#1\endmarker
    \ifcat\noexpand\first@char A\else
    \def\first@char{}%
    \fi
    \macc@nested@a\relax111{\first@char}%
    \fi
    \endgroup
}
\patchcmd{\algorithmic}{\addtolength{\ALC@tlm}{\leftmargin}}{\addtolength{\ALC@tlm}{\leftmargin}}{}{}
\newtheorem{theorem}{Theorem}
\newtheorem{lemma}[theorem]{Lemma}
\newtheorem{corollary}[theorem]{Corollary}
\newdefinition{definition}{Definition}
\newdefinition{exmp}{Example}
\crefname{equation}{}{}
\Crefname{equation}{}{}
\crefname{figure}{Figure}{Fig.}
\crefname{table}{Table}{Table}
\crefname{algorithm}{Algorithm}{Alg.}
\crefname{theorem}{Theorem}{Theorem}
\crefname{lemma}{Lemma}{Lemma}
\crefname{corollary}{Corollary}{Corollary}
\crefname{definition}{Definition}{Definition}
\crefname{exmp}{Example}{Example}
\Crefname{ALC@unique}{Line}{Lines}
\begin{document}
    \title{{Faster Tensor Train Decomposition for Sparse Data}
    }
    
    
    \author[1]{Lingjie Li}
    \ead{li-lj18@mails.tsinghua.edu.cn}
    \author[1]{Wenjian Yu\corref{cor1}}
    \ead{yu-wj@tsinghua.edu.cn}
    \author[2]{Kim Batselier}
    \ead{k.batselier@tudelft.nl}
    
    \address[1]{Department of Computer Science and Technology, BNRist, Tsinghua University, Beijing 100084, China}
    \address[2]{The Delft Center for Systems and Control, Delft University of Technology, Delft, Netherlands}
    
    \cortext[cor1]{Corresponding author}
    
    \date{}
    
    \begin{abstract}
        In recent years, the application of tensors has become more widespread in fields that involve data analytics and numerical computation. Due to the explosive growth of data, low-rank tensor decompositions have become a powerful tool to harness the notorious \textit{curse of dimensionality}. The main forms of tensor decomposition include CP decomposition, Tucker decomposition, tensor train (TT) decomposition, etc. Each of the existing TT decomposition algorithms, including the TT-SVD and randomized TT-SVD, is successful in the field, but neither can both accurately and efficiently decompose large-scale sparse tensors. Based on previous research, this paper proposes a new quasi-best fast TT decomposition algorithm for large-scale sparse tensors with proven correctness and the upper bound of its complexity is derived. In numerical experiments, we verify that the proposed algorithm can decompose sparse tensors faster than the TT-SVD, and have more speed, precision and versatility than randomized TT-SVD, and it can be used to decomposes arbitrary high-dimensional tensor without losing efficiency when the number of non-zero elements is limited. The new algorithm implements a large-scale sparse matrix TT decomposition that was previously unachievable, enabling tensor decomposition based algorithms to be applied in larger-scale scenarios.
    \end{abstract}
    
    \begin{keyword}
        tensor train decomposition\sep sparse data\sep TT-rounding\sep parallel-vector rounding
    \end{keyword}
    
    
    \maketitle
    
    \section{Introduction}
    
    In the fields of physics, data analytics, scientific computing, digital circuit design, machine learning, etc., data are often organized into a matrix or tensor so that various sophisticated data processing techniques can be applied. One example of such a technique is the low-rank matrix decomposition. It is often implemented through the well-known singular value decomposition (SVD).
    \begin{equation*}
        \mat{A}=\mat{U}\mat{\Sigma}\mat{V}^\top,
    \end{equation*}
    where $\mat{A}\inR{n\times m}, \mat{U}\inR{n\times n}, \mat{\Sigma}\inR{n\times m}, \mat{V}\inR{m\times m}$. $\mat{U}$ and $\mat{V}$ are orthogonal matrices, and $\mat{\Sigma}$ is a diagonal matrix whose diagonal elements (a.k.a. singular values) $\sigma_i, (1\le i\le \min(m, n))$ are non-negative and non-ascending.
    
    
    In recent years, tensors, as a high-dimensional extension of matrices, have also been applied as a powerful and universal tool. In order to overcome the \textit{curse of dimensionality} (the data size of a tensor increases exponentially with the increase of the dimensionality of the tensor), people have extended the notion of a low-rank matrix decomposition to tensors, proposing tensor decompositions such as the CP decomposition \cite{hitchcock1927expression}, the Tucker decomposition \cite{tucker1966some} and the tensor train (TT) decomposition \cite{oseledets2011tensor}. Among them, the TT decomposition transforms the storage complexity of an $n^d$ tensor into $O(dnr^2)$, where $r$ is the maximal TT rank, effectively removing the exponential dependence on $d$. The TT decomposition is advantageous for processing large data sets and has been applied to problems like linear equation solution \cite{oseledets2012solution}, electronic design automation (EDA)~\cite{zhang2014enabling,liu2015model,zhang2017tensor}, system identification~\cite{batselier2017tensor}, large-scale matrix processing~\cite{oseledets2010approximation,kressner2016low,batselier2018computing}, image/video inpainting~\cite{wang2017efficient,ko2018fast}, data mining~\cite{wang2018tensor} and machine learning~\cite{wang2017support,chen2017parallelized,xu2018whole}.
    
    To realize the TT decomposition, the TT-SVD algorithm \cite{oseledets2011tensor} was proposed. It involves a sequence of SVD computations on reshaped matrices. For a large-scale sparse tensor, the TT-SVD consumes excessive computing time and memory usage. Another method employs “cross approximation” to perform low-rank TT-approximations \cite{oseledets2010tt,savostyanov2011fast}, but it still needs too many calculations to find a good representation. Recently, a randomized TT-SVD algorithm \cite{huber2017randomized} was proposed, which incorporates the randomized SVD algorithm \cite{halko2011finding} into the TT-SVD algorithm so as to reduce the runtime for converting a sparse tensor. However, due to the inaccuracy of the randomized SVD, the randomized TT-SVD algorithm usually results in the TT with exaggerated TT ranks or insufficient accuracy. This largely limits its application. 
    
    In this work, we propose a fast and effective TT decomposition algorithm specifically for large sparse data tensors. It includes the steps of constructing an exact TT with nonzero fibers, more efficient parallel-vector rounding and revised TT-rounding. The new algorithm, called \emph{FastTT}, produces the same compact TT representation as the TT-SVD algorithm \cite{oseledets2011tensor}, but exhibits a significant runtime advantage for large sparse data. We  have also extended the algorithm to convert a matrix into the ``matrix in TT-format'', also known as a matrix product operator (MPO). In addition, dynamic approaches are proposed to choose the parameters in the FastTT algorithm. Experiments are carried out on sparse data in problems of image/video inpainting, linear equation solution, and data analysis. The results show that the proposed algorithm is several times to several hundreds times faster than the TT-SVD algorithm without loss of accuracy or an increase of the TT ranks. The speedup ratios are up to 9.6X for the image/video inpainting, 240X for the linear equation and 35X for the sparse data processing, respectively.
    The experimental results also reveal the effectiveness of the proposed dynamic approaches for choosing the parameters in the FastTT algorithm, and the advantages of FastTT over TT-cross and the randomized TT-SVD algorithm \cite{huber2017randomized}. For reproducibility, we have shared the C++ codes of the proposed algorithms and experimental data on \url{https://github.com/lljbash/FastTT}.
    
    
    
    \section{Notations and Preliminaries}
    
    In this article we use boldface capital calligraphic letters (e.g. $\ten{A}$) to denote tensors, boldface capital letters (e.g. $\mat{A}$) to denote matrices, boldface letters (e.g. $\mat{a}$) to denote vectors, and roman (e.g. $a$) or Greek (e.g. $\alpha$) letters to denote scalars.
    
    
    \subsection{Tensor}
    
    Tensors are a high-dimensional generalization of matrices and vectors. A one-dimensional array $\mat{a}\inR{n}$ is called a vector, and a two-dimensional array $\mat{A}\inR{n_1\times n_2}$ is called a matrix. When the dimensionality is extended to $d\ge3$, the $d$-dimensional array $\ten{A}\inR{n_1\times n_2\times\cdots\times n_d}$ is called a $d$-way tensor. The positive integer $d$ is defined as the \textbf{order} of the tensor. $(n_1,n_2,\cdots,n_d)$ are the \textbf{dimensions} of the tensor, where each $n_k$ is the dimension of a particular mode. Vectors and matrices can be considered as 1-way and 2-way tensors, respectively.
    
    \subsection{Basic Tensor Arithmetic}
    
    \begin{definition}{\textbf{Vectorization.}}
        If we rearrange the entries of $\ten{A}\inR{n_1\times\cdots\times n_d}$ into a vector $\mat{b}\inR{\prod_{k=1}^{d}n_k}$, where
        \begin{equation*}
            a_{i_1,i_2,\cdots,i_d}=b_{\sum_{k=1}^{d-1}\left[(i_k-1)\prod_{l=k+1}^{d}n_l\right]+i_d},
        \end{equation*}
        then the vector $\mat{b}$ is called the vectorization of the tensor $\ten{A}$, represented as $\vect(\ten{A})$.
    \end{definition}
    
    \begin{definition}{\textbf{Reshaping.}}
        Like vectorization, if we rearrange the entries of $\ten{A}$ into anther tensor $\ten{B}$ satisfying $\vect(\ten{A})=\vect(\ten{B})$, then the tensor $\ten{B}$ is called the reshaping of $\ten{A}$, represented as $\reshape(\ten{A}, Dims)$ 
        , where $Dims$ denotes the dimensions of $\ten{B}$. In fact, vectorization is a special kind of reshaping.
    \end{definition}
    
    \begin{definition}{\textbf{Unfolding \cite{oseledets2011tensor}.}}
        Unfolding is also a kind of reshaping. If we reshape $\ten{A}\inR{n_1\times n_2\times\cdots\times n_d}$ into a matrix $\mat{B}\inR{m_1\times m_2}$ where $m_1=\prod_{j=1}^{k}n_j,m_2=\prod_{j=k+1}^{d}n_j$, then $\mat{B}$ is called the $k$-unfolding of $\ten{A}$, represented as $\spl_k(\ten{A})$.
    \end{definition}
    
    \begin{definition}{\textbf{Contraction.}}\label{def:contraction}
        Contraction is the tensor generalization of matrix product. For two tensors $\ten{A}\inR{n_1\times n_2\times\cdots\times n_{d_1}}$ and $\ten{B}\inR{m_1\times m_2\times\cdots\times m_{d_2}}$ satisfying $n_{k_1}=m_{k_2}$, their $(k_1,k_2)$-contraction $\ten{C}=\ten{A}\circ_{k_1}^{k_2}\ten{B}$ is defined as
        \begin{equation*}
            c_{i_1\cdots i_{k_1-1}j_1\cdots j_{k_2-1}j_{k_2+1}\cdots j_{d_2}i_{k_1+1}\cdots i_{d_1}}=
            \sum_{l=1}^{n_{k_1}}
            a_{i_1\cdots i_{k_1-1}li_{k_1+1}\cdots i_{d_1}}
            b_{j_1\cdots j_{k_2-1}lj_{k_2+1}\cdots j_{d_2}},
        \end{equation*}
        where $\ten{C}\inR{n_1\times\cdots\times n_{k_1-1}\times m_1\times\cdots\times m_{k_2-1}\times m_{k_2+1}\times\cdots\times m_{d_2}\times n_{k_1+1}\times\cdots\times n_{d_1}}$. If $k_1$ and $k_2$ are not specified, $\ten{A}\circ\ten{B}$ means the $(d_1,1)$-contraction of $\ten{A}$ and $\ten{B}$.
    \end{definition}
    
    \begin{definition}{\textbf{Tensor-matrix product.}}\label{def:t-m-product}
        The $k$-product of a tensor $\ten{A}$ and a matrix $\mat{B}$ can be defined as tensor contraction if the matrix is treated as a 2-way tensor $\ten{B}$.
        \begin{equation*}
            \ten{A}\times_k\mat{B}=\ten{A}\circ_k^1\ten{B}.
        \end{equation*}
    \end{definition}
    
    \begin{definition}{\textbf{Rank-1 tensor.}}\label{def:rank1tt}
        A rank-1 $d$-way tensor can be written as the outer product
        \begin{equation*}
            \ten{A}=\mat{u}^{(1)}\circ\mat{u}^{(2)}\circ\cdots\circ\mat{u}^{(d)},
        \end{equation*}
        of $d$ column vectors $\mat{u}^{(1)}\inR{n_1},\ldots,\mat{u}^{(d)}\inR{n_d}$. The entries of $\ten{A}$ can be computed as $a_{i_1i_2\cdots i_d}=u^{(1)}_{i_1}u^{(2)}_{i_2}\cdots u^{(d)}_{i_d}$.
    \end{definition}
    
    \subsection{Tensor Train Decomposition}
    
    A tensor train decomposition \cite{oseledets2011tensor}, shown in \cref{fig:ttda}, represents a $d$-way tensor $\ten{A}\inR{n_1\times n_2\times\cdots\times n_d}$ with two 2-way tensors and $(d-2)$ 3-way tensors:
    \begin{equation*}
        \ten{A}=\core{G}{1}\circ\core{G}{2}\circ\cdots\circ\core{G}{d},
    \end{equation*}
    where $\core{G}{k}\inR{r_{k-1}\times n_k\times r_k}$ is the $k$-th core tensor. Per definition, $r_0=r_d=1$ such that $\core{G}{1}$ and $\core{G}{d}$ are actually matrices. The dimensions $r_0, r_1, \ldots, r_d$ of the auxiliary indices are called the tensor-train (TT) ranks. When all the TT ranks have the same value, then we can just call it the \emph{TT rank}.
    
    \begin{figure}[!htbp]
        \centering
        \label{fig:ttd}
        \begin{tabular}{cc}
            \subfloat[A 3-way tensor and its TT decomposition] {\includegraphics[width=0.56\textwidth]{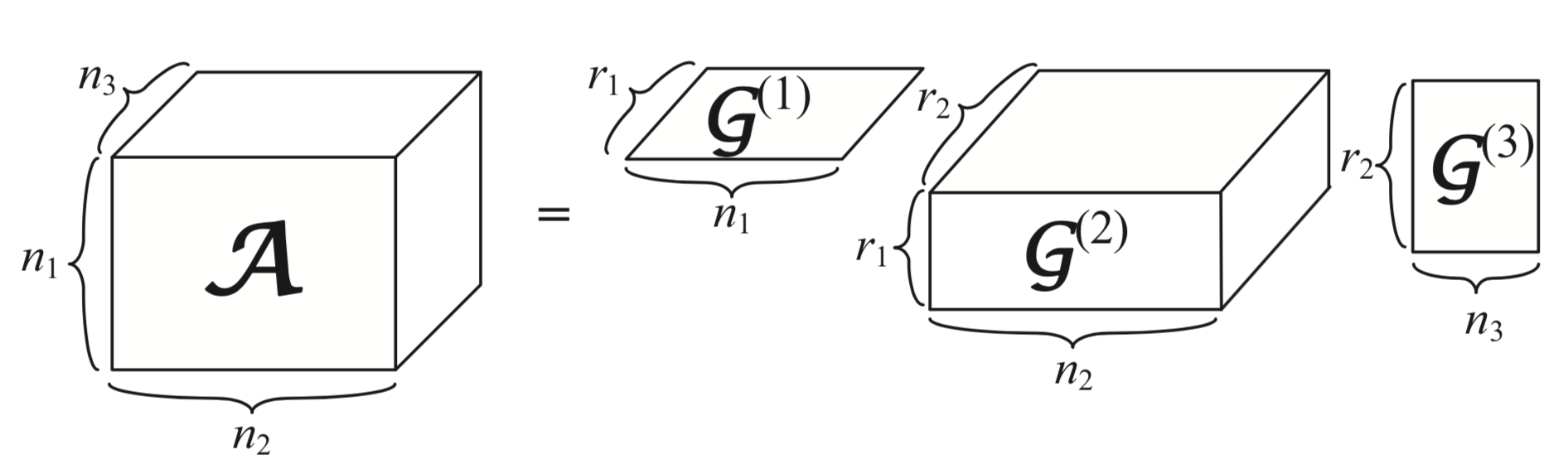}\label{fig:ttda}} &
            \subfloat[TT diagram of a 3-way tensor] {
                \resizebox{0.4\textwidth}{!}{%
                    \begin{tikzpicture}[text height=1.5ex, text depth=0.25ex, auto, node distance=1.5cm, ampersand replacement=\&]
                        \tikzstyle{tensor} = [circle, thick, inner sep=0pt, minimum size=1cm, draw=black]
                        \tikzstyle{null} = [circle, minimum size=0cm]
                        \matrix[column sep=0.75cm] {
                            \node[tensor] (g1) {$\core{G}{1}$}; \&
                            \node[tensor] (g2) {$\core{G}{2}$}; \&
                            \node[tensor] (g3) {$\core{G}{3}$}; \&
                            \\
                        };
                        \node[null] (n0) [left of=g1] {};
                        \node[null] (n1) [below of=g1] {};
                        \node[null] (n2) [below of=g2] {};
                        \node[null] (n3) [below of=g3] {};
                        \node[null] (n4) [right of=g3] {};
                        \path[every node/.style={above}]
                        (n0) edge[dashed] node{$r_0$} (g1)
                        (g1) edge node{$r_1$} (g2)
                        (g2) edge node{$r_2$} (g3)
                        (g3) edge[dashed] node{$r_3$} (n4)
                        ;
                        \path[every node/.style={left}]
                        (g1) edge node{$n_1$} (n1)
                        (g2) edge node{$n_2$} (n2)
                        (g3) edge node{$n_3$} (n3)
                        ;
                    \end{tikzpicture}
                }\label{fig:ttdb}
            }
        \end{tabular}
        \caption{Graphical illustrations of the tensor train (TT) decomposition, where a  3-way tensor $\ten{A}$ is decomposed into two 2-way tensors $\core{G}{1},\core{G}{3}$ and a 3-way tensor $\core{G}{2}$.}
    \end{figure}
    
    \Cref{fig:ttdb} shows a very convenient graphical representation \cite{batselier2017tensor} of a tensor train. In this diagram, each circle represents a tensor where each ``leg'' attached to it denotes a particular mode of the tensor. The connected line between two circles represents the contraction of two tensors. The dimension is labeled besides each ``leg''. \Cref{fig:ttdb} also illustrates a simple tensor network, which is a collection of tensors that are interconnected through contractions. By fixing the second index of $\core{G}{k}$ to $i_k$, we obtain a matrix $\ten{G}^{(k)}_{i_k}$ (actually a vector if $k=1$ or $k=d$). Then the entries of $\ten{A}$ can be computed as
    \begin{equation*}
        a_{i_1i_2\cdots i_d}=\core{G}{1}_{i_1}\core{G}{2}_{i_2}\cdots\core{G}{d}_{i_d}.
    \end{equation*}
    
    The tensor train decomposition can be computed with the TT-SVD algorithm \cite{oseledets2011tensor}, which consists of doing $d-1$ consecutive reshapings and matrix SVD computations. It is described as \cref{alg:TTSVD}. The expression $\mathrm{rank}_\delta(\mat{C})$ denotes the number of remaining singular values after the $\delta$-truncated SVD. An advantage of TT-SVD is that a quasi-optimal approximation can be obtained with a given error bound and an automatic rank determination.
    
    \begin{algorithm}[!htbp]
        \caption{TT-SVD \cite[p.~2301]{oseledets2011tensor}}
        \label{alg:TTSVD}
        \begin{algorithmic}[1]
            \setcounter{ALC@unique}{0}
            \REQUIRE a tensor $\ten{A}\inR{n_1\times n_2\times\ldots\times n_d}$, desired accuracy tolerance $\varepsilon$.
            \ENSURE Core tensors $\core{G}{1},\ldots,\core{G}{d}$ of the TT-approximation $\ten{B}$ to $\ten{A}$ with TT ranks $r_k$ ($k=0, 1, \cdots, d$) satisfying
            \begin{equation*}
                \|\ten{A}-\ten{B}\|_F\le\varepsilon\|\ten{A}\|_F.
            \end{equation*}
            \STATE Compute truncation parameter $\delta=\frac{\varepsilon}{\sqrt{d-1}}\|\ten{A}\|_F$.
            \STATE $\ten{C}\coloneqq\ten{A},r_0\coloneqq1$.
            \FOR{$k=1$ to $d-1$}
            \STATE $\mat{C}\coloneqq$ reshape$(\ten{C},[r_{k-1}n_k,\prod_{i=k+1}^{d}n_i])$.
            \STATE Compute $\delta$-truncated SVD: $\mat{C}=\mat{U\Sigma V}^T+\mat{E}$, $\|\mat{E}\|_F\le\delta$, ~ $r_k\coloneqq\rank_\delta(\mat{C})$.
            \STATE $\core{G}{k}\coloneqq\reshape(\mat{U},[r_{k-1},n_k,r_k])$.
            \STATE $\ten{C}\coloneqq \mat{\Sigma V}^T$.
            \ENDFOR
            \STATE $\core{G}{d}\coloneqq\ten{C}$
            \STATE Return tensor $\ten{B}$ in TT-format with cores $\core{G}{1},\ldots,\core{G}{d}$.
        \end{algorithmic}
    \end{algorithm}
    
    We define the FLOP count of the TT-SVD algorithm as $f_{\mathrm{TTSVD}}$. Then
    
    \begin{equation}\label{equ:ttsvd}
        f_{\mathrm{TTSVD}}\approx\sum_{i=1}^{d-1}\left[f_{\mathrm{SVD}}\left(r_{i-1}n_i,\prod_{j=i+1}^{d}n_j\right)\right],
    \end{equation}
    where $f_{\mathrm{SVD}}(m,n)=C_{\svd}\, m\,n\min(m,n)$ is the FLOP count of performing the economic SVD for an $m\times n$ dense matrix. 
    
    A big problem with \cref{alg:TTSVD} is the large computation cost of $\delta$-truncated SVD on large-scale unfolded matrices when the dimensions grow. A possible solution is to replace SVD with more economic decomposition like pseudo-skeleton decomposition~\cite{goreinov1997theory}. TT-cross \cite{oseledets2010tt,savostyanov2011fast} is a multidimensional generalization of the skeleton decomposition to the tensor case. The algorithm uses a sweep strategy and can produce TT-approximates with given accuracy or maximal ranks. The time complexity of TT-cross depends on $d$ linearly.
    
    As for decomposing large-scale sparse tensors, a simple idea is to employ the truncated SVD algorithm based on Krylov subspace iterative method, e.g. the built-in function \texttt{svds} in Matlab. However, \texttt{svds} requests a truncation rank as input, and thus cannot be directly applied here. Moreover, the sparsity can only be taken advantage of at the first iteration step of \cref{alg:TTSVD}. After the first truncated SVD, the  tensor becomes dense and thus the computation cannot be reduced.

    \subsection{Rounding}
    
    Sometimes one is given tensor data already in the TT-format but with suboptimal TT-ranks. In order to save storage and speed up the following computation, one can reduce the TT-ranks while maintaining accuracy, through a procedure called \textit{rounding}. It is realized with the TT-rounding  algorithm \cite{oseledets2011tensor} (\cref{alg:ttro}). The algorithm is based on the same principle as TT-SVD and also produces quasi-optimal TT-ranks with a given error bound. TT-rounding can be of great use in cases where a large tensor is represented in TT-format.
    \begin{algorithm}[!htbp]
        \caption{TT-rounding \cite[p.~2305]{oseledets2011tensor}}
        \label{alg:ttro}
        \begin{algorithmic}[1]
            \setcounter{ALC@unique}{0}
            \REQUIRE Cores $\core{G}{1},\ldots,\core{G}{d}$ of the TT-format tensor $\ten{A}$ with TT-ranks $r_1,\ldots,r_{d-1}$, desired accuracy tolerance $\varepsilon$.
            \ENSURE Cores $\core{G}{1},\ldots,\core{G}{d}$ of the TT-approximation $\ten{B}$ to $\ten{A}$ in the TT-format with TT-ranks $r_1,\ldots,r_{d-1}$. The computed approximation satisfies
            \begin{equation*}
                ||\ten{A}-\ten{B}||_F\le\varepsilon||\ten{A}||_F.
            \end{equation*}
            \STATE Compute truncation parameter $\delta=\frac{\varepsilon}{\sqrt{d-1}}\|\ten{A}\|_F$.
            \FOR{$k=d,\dots,2$}
            \STATE $\mat{G}\coloneqq\spl_1^T(\core{G}{k}).$
            \STATE Compute economic QR decomposition $\mat{G}=\mat{QR},\quad r_{k-1}\coloneqq\rank(\mat{G})$.
            \STATE $\core{G}{k}\coloneqq\reshape(\mat{Q^T},[r_{k-1},n_k,r_k]),\quad\core{G}{k-1}\coloneqq\core{G}{k-1}\times_3 \mat{R}^T.$
            \ENDFOR
            \FOR{$k=1,\dots,d-1$}
            \STATE $\mat{G}\coloneqq\spl_2(\core{G}{k})$.
            \STATE Compute $\delta$-truncated SVD: $\mat{G}=\mat{U\Sigma V}^T+\mat{E}$, $\|\mat{E}\|_F\le\delta$, ~ $r_k\coloneqq\rank_\delta(\mat{C})$.
            \STATE $\core{G}{k}\coloneqq\reshape(\mat{U},[r_{k-1},n_k,r_k]),\quad\core{G}{k+1}\coloneqq\core{G}{k+1}\times_1(\mat{\Sigma V}^T)$.
            \ENDFOR
            \STATE $\core{G}{d}\coloneqq\ten{C}$
            \STATE Return tensor $\ten{B}$ in TT-format with cores $\core{G}{1},\ldots,\core{G}{d}$.
        \end{algorithmic}
    \end{algorithm}
    
    Parallel-vector rounding~\cite{hubig2017generic} is another rounding method which replaces the truncated SVD with Deparallelisation (\cref{alg:depar}). It removes the paralleled columns of an $a\times b$ matrix in $O(ab\alpha)$ time, where $\alpha$ is the number of non-parallel columns. Parallel-vector rounding is lossless, runs much faster than TT-rounding and can preserve the sparsity of TT. However, it usually cannot reduce the TT-ranks much; its effectiveness highly depends on the parallelism in TT-cores. Therefore, the parallel-vector rounding is suitable for a constructed sparse TT, rather than the construction of a TT.
    
    \begin{algorithm}[!htbp]
        \caption{Deparallelisation~\cite[Appendix B]{hubig2017generic}}
        \label{alg:depar}
        \begin{algorithmic}[1]
            \setcounter{ALC@unique}{0}
            \REQUIRE Matrix $\mat{M}\inR{a\times b}$.
            \ENSURE Matrix $\mat{N}\inR{a\times\beta},\mat{T}\inR{\beta\times b}$ s.t. $\mat{M}=\tilde{\mat{M}}\times\mat{T}$ and $\mat{N}$ has at most as many columns as $\mat{M}$ and no two columns which are parallel to each other.
            \STATE Let $K$ be the set of kept columns, empty initially.
            \STATE Let $\mat{T}$ be the dynamically-resized transfer matrix.
            \FOR{every column index $j\in[1,b]$}
            \FOR{every kept index $i\in[1,|K|]$}
            \IF{the $j$-th column $\mat{M}_{:j}$ is parallel to column $K_i$}
            \STATE Set $T_{i,j}$ to the prefactor between the two columns.
            \ELSE
            \STATE add $\mat{M}_{:j}$ to $K$, set $T_{|K|,j}=1$.
            \ENDIF
            \ENDFOR
            \ENDFOR
            \STATE Construct $\mat{N}$ by horizontally concatenating the columns stored in $K$.
            \STATE Return $\mat{N}$ and $\mat{T}$.
        \end{algorithmic}
    \end{algorithm}
    
    \section{Faster tensor train decomposition of sparse tensor}
    The TT-SVD algorithm does not take advantage of the possible sparsity of data since the $\delta$-truncated SVD is used. In this section, we propose a new algorithm for computing the TT decomposition of a sparse tensor whereby the sparsity is explicitly exploited. The key idea is to rearrange the data in such a way that the desired TT decomposition can be written down explicitly, followed by a parallell-vector rounding step.
    
    \subsection{Constructing TT with nonzero $p$-fibers}
    
    \begin{figure}[!htbp]
        \centering
        \includegraphics[width=\textwidth]{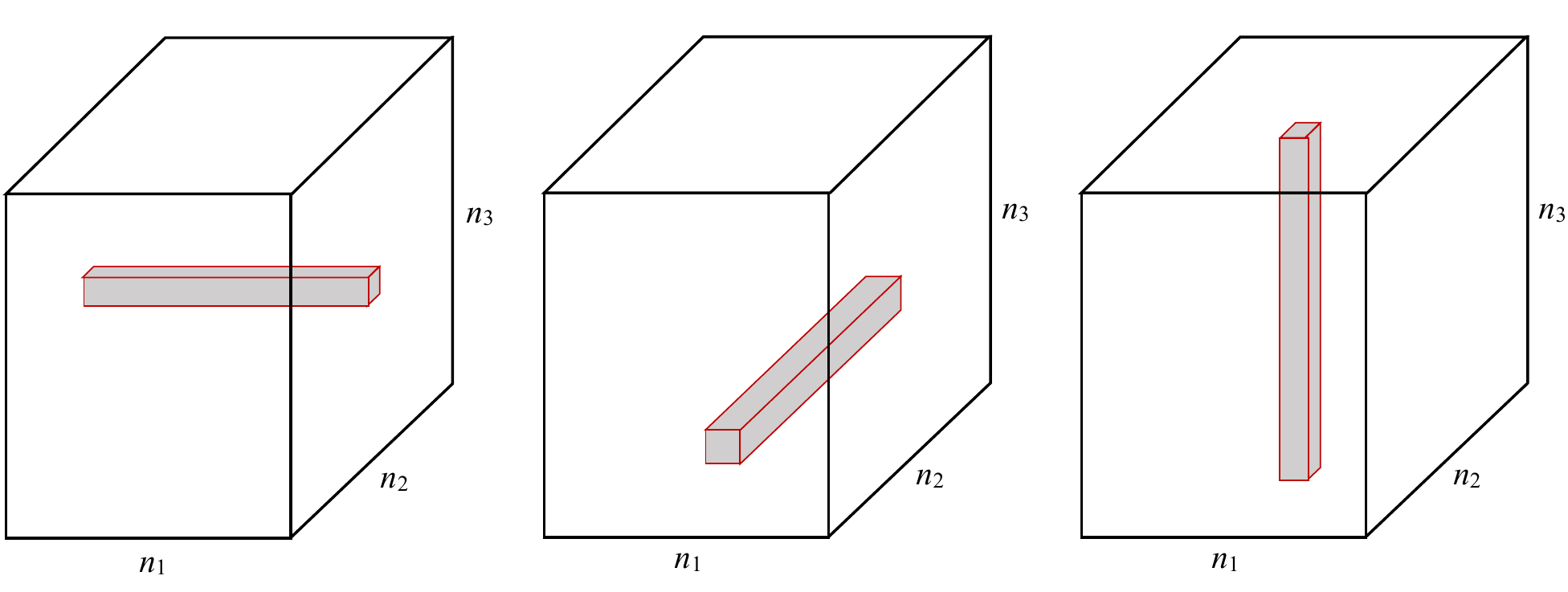}
        \vskip -10pt
        \caption{A $1$-fiber, a $2$-fiber and a $3$-fiber of tensor $\ten{A}\inR{n_1\times n_2\times n_3}$.}
        \label{fig:pfiber}
    \end{figure}
    
    \overfullrule=0pt For a tensor \mbox{$\ten{A} \inR{n_1\times\cdots\times n_d}$} and a given integer $p$ that satisfies $1\le p\le d$, we define a $p$-fiber of a tensor as a fiber of the tensor in the direction $\mat{e}_p$. \Cref{fig:pfiber} shows a $1$-fiber, a $2$-fiber and a $3$-fiber of a $3$-d tensor. We can specify a $p$-fiber of $\ten{A}$ by fixing the indices except the $p$-th dimension $\mat{i}_p=(i_1, \ldots, i_{p-1}, i_{p+1}, \ldots, i_d)$, 
    \begin{equation}
        \mat{v}_p(\mat{i}_p) \coloneqq \ten{A}(i_1, \ldots, i_{p-1}, :, i_{p+1}, \ldots, i_d) .
    \end{equation}
    Then a tensor $\ten{A}_1$ with only one non-zero $p$-fiber $\mat{v}_p(\mat{i}_p)$ can be easily formed as the outer product of the $p$-fiber and $d-1$ standard basis vector and is thus a rank-1 tensor,
    \begin{equation}
        \ten{A}_1=\mat{e}_{i_1}\circ\cdots\circ\mat{e}_{i_{p-1}}\circ \mat{v}_p(\mat{i}_p)\circ \mat{e}_{i_{p+1}}\circ\cdots\circ \mat{e}_{i_d}.
    \end{equation}
    
    Suppose we have $R$ nonzero $p$-fiber of sparse tensor $\ten{A}$ with indices $(i_1,\ldots,i_{p-1},i_{p+1},\ldots,i_d)$ forming a set $S_p$ with $|S_p|=R$. Then, $\ten{A}$ can be represented as the sum of $R$ rank-1 tensors,
    \begin{equation}\label{equ_split}
        \ten{A}=\!\!\sum_{(i_1,\ldots,i_{p-1},i_{p+1},\ldots,i_d)\in S_p}\!\!\mat{e}_{i_1}\circ\cdots\circ \mat{e}_{i_{p-1}}\circ \mat{v}_{i_1,\ldots,i_{p-1},i_{p+1},\ldots,i_d}\circ \mat{e}_{i_{p+1}}\circ\cdots\circ \mat{e}_{i_d},
    \end{equation}
    where $\mat{e}_{i_k}\inR{n_k}$ is the standard basis vector. Next, we are going to construct a tensor train based on this representation.
    
    \begin{lemma}\label{lem:rank1}
        \mbox{Any rank-1 tensor is equivalent to a tensor train whose TT rank is 1.}
        \begin{equation}\label{equ_rank1}
            \mat{v}_1\circ \mat{v}_2\circ\ldots\circ \mat{v}_d=\core{V}{1}\circ\core{V}{2}\circ\ldots\circ\core{V}{d},
        \end{equation}
        \overfullrule=0pt   where $\mat{v}_k \inR{n_k}$, $(k=1, \cdots, d)$, $\core{V}{1}=\reshape(\mat{v}_1,[n_1,1])$, $\core{V}{d}=\reshape(\mat{v}_d,[1,n_d])$, and $\core{V}{k}=\reshape(\mat{v}_k,[1,n_k,1])$ for $1<k<d$.
    \end{lemma}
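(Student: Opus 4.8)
The plan is to prove the identity entrywise, comparing a generic entry $a_{i_1 i_2 \cdots i_d}$ of the left-hand side with the corresponding entry produced by the tensor train on the right. First I would recall the two entry formulas already established in the excerpt: by \cref{def:rank1tt}, the rank-1 tensor $\mat{v}_1\circ\cdots\circ\mat{v}_d$ has entries equal to the product of the component scalars, while the TT-format entry is the matrix product $\core{V}{1}_{i_1}\core{V}{2}_{i_2}\cdots\core{V}{d}_{i_d}$ obtained by fixing the mode-$2$ index of each core. The whole argument then reduces to evaluating this matrix product.

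Next I would examine the shapes of the cores dictated by the stated reshapings. Since $\core{V}{1}=\reshape(\mat{v}_1,[n_1,1])\inR{n_1\times 1}$, $\core{V}{d}=\reshape(\mat{v}_d,[1,n_d])\inR{1\times n_d}$, and $\core{V}{k}\inR{1\times n_k\times 1}$ for $1<k<d$, every internal rank satisfies $r_1=\cdots=r_{d-1}=1$; this is exactly the claim that the resulting tensor train has TT rank $1$, and it also guarantees that each fixed-index slice $\core{V}{k}_{i_k}$ is a $1\times 1$ matrix, i.e.\ a scalar.

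I would then identify each such scalar explicitly: from the reshape definitions, fixing the free mode index to $i_k$ extracts precisely the $i_k$-th entry of $\mat{v}_k$, so $\core{V}{k}_{i_k}=(\mat{v}_k)_{i_k}$ for every $k$. Multiplying these scalars gives $(\mat{v}_1)_{i_1}(\mat{v}_2)_{i_2}\cdots(\mat{v}_d)_{i_d}$, which is exactly the entry of the rank-1 tensor from \cref{def:rank1tt}. As the chosen multi-index $(i_1,\ldots,i_d)$ was arbitrary, the two tensors agree entrywise and are therefore equal.

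The main obstacle, such as it is, is purely bookkeeping rather than conceptual: one must handle the two boundary cores $\core{V}{1}$ and $\core{V}{d}$, which are matrices rather than $3$-way tensors, consistently with the interior cores, and verify against the vectorization and reshape conventions of the earlier definitions that ``fixing the second index'' selects the correct component of $\mat{v}_k$ in each case. Once the indexing conventions are pinned down, the scalar identity $\core{V}{k}_{i_k}=(\mat{v}_k)_{i_k}$ is immediate and the result follows. An alternative, essentially equivalent route would be a short induction on $d$, contracting the last core into the remaining train, but the direct entrywise check is the most transparent.
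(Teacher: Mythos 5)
Your proof is correct and matches the paper's approach: the paper simply remarks that \cref{lem:rank1} ``can be easily derived'' from \cref{def:contraction,def:t-m-product,def:rank1tt} and the TT definition, and your entrywise check---observing that every core slice $\core{V}{k}_{i_k}$ is the $1\times1$ scalar $(\mat{v}_k)_{i_k}$, so the matrix product collapses to $(\mat{v}_1)_{i_1}\cdots(\mat{v}_d)_{i_d}$---is precisely that derivation made explicit. Nothing is missing; your handling of the boundary cores and the rank-1 bookkeeping is exactly the intended argument.
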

    
    \begin{lemma} \cite[p.~2308]{oseledets2011tensor}\label{lem:addtt}
        Suppose we have two tensors $\ten{A}\inR{n_1\times n_2\times\ldots\times n_d}$ and $\ten{B}\inR{n_1\times n_2\times\ldots\times n_d}$ in the TT format,
        \begin{align*}
            a_{i_1i_2\cdots i_d}&=\core{A}{1}_{i_1}\core{A}{2}_{i_2}\cdots\core{A}{d}_{i_d},\\
            b_{i_1i_2\cdots i_d}&=\core{B}{1}_{i_1}\core{B}{2}_{i_2}\cdots\core{B}{d}_{i_d}.
        \end{align*}
        The TT cores of the sum $\ten{C}=\ten{A}+\ten{B}$ in the TT format then satisfy
        \begin{align}\label{equ_ttsum}
            \begin{split}
                \core{C}{k}_{i_k} &= \begin{bmatrix} \core{A}{k}_{i_k} & \mat{O} \\ \mat{O} & \core{B}{k}_{i_k} \end{bmatrix},\ k = 2,\ldots,d-1,\\
                \core{C}{1}_{i_1} &= \begin{bmatrix} \core{A}{1}_{i_1} & \core{B}{1}_{i_1} \end{bmatrix},  ~ 
                \core{C}{d}_{i_d} = \begin{bmatrix} \core{A}{d}_{i_d}\\[2ex] \core{B}{d}_{i_d} \end{bmatrix}, 
            \end{split}
        \end{align}
        where $\mat{O}$ denotes a zero matrix of appropriate dimensions.
    \end{lemma}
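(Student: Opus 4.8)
The plan is to verify the proposed cores entrywise. Since any TT representation evaluates the scalar $c_{i_1\cdots i_d}$ as the chained matrix product $\core{C}{1}_{i_1}\core{C}{2}_{i_2}\cdots\core{C}{d}_{i_d}$ of its fixed-index slices, it suffices to carry out this product for the block matrices defined in \cref{equ_ttsum} and recognize the outcome as $a_{i_1\cdots i_d}+b_{i_1\cdots i_d}$. Fixing an arbitrary multi-index $(i_1,\ldots,i_d)$ throughout, I would first note that each slice $\core{C}{k}_{i_k}$ has size $(r^A_{k-1}+r^B_{k-1})\times(r^A_k+r^B_k)$, so all products below are conformable once we set the summed ranks $r_k=r^A_k+r^B_k$, with $r_0=r_d=1$ preserved because the boundary slices are concatenations rather than block-diagonal matrices.

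The key step is an induction showing that the partial product keeps the two summands separated in disjoint blocks. I would claim that for every $k$ with $1\le k\le d-1$,
\[
\core{C}{1}_{i_1}\cdots\core{C}{k}_{i_k}=\begin{bmatrix}\core{A}{1}_{i_1}\cdots\core{A}{k}_{i_k} & \core{B}{1}_{i_1}\cdots\core{B}{k}_{i_k}\end{bmatrix},
\]
a horizontal concatenation of the partial products of $\ten{A}$ and $\ten{B}$. The base case $k=1$ is precisely the definition of $\core{C}{1}_{i_1}$. For the inductive step I would right-multiply this block row by the block-diagonal slice $\core{C}{k+1}_{i_{k+1}}$; because the off-diagonal blocks vanish, the product splits into $[\,(\cdots)\core{A}{k+1}_{i_{k+1}}\ \ (\cdots)\core{B}{k+1}_{i_{k+1}}\,]$, advancing the two chains independently. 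This decoupling is the whole reason the block-diagonal form reproduces a sum rather than a mixture.

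Finally I would multiply the partial product at $k=d-1$ by the stacked boundary slice $\core{C}{d}_{i_d}$; a horizontal concatenation times a vertical concatenation collapses to $\core{A}{1}_{i_1}\cdots\core{A}{d}_{i_d}+\core{B}{1}_{i_1}\cdots\core{B}{d}_{i_d}$, which by hypothesis equals $a_{i_1\cdots i_d}+b_{i_1\cdots i_d}=c_{i_1\cdots i_d}$. Since the multi-index was arbitrary, the constructed cores represent $\ten{C}=\ten{A}+\ten{B}$ exactly. I expect no genuine obstacle here: the argument is pure block-matrix bookkeeping, and the only points demanding care are the conformability of the block dimensions and the separate treatment of the first and last cores, whose ranks $r_0,r_d$ must remain equal to $1$.
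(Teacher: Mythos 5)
Your proposal is correct and is exactly the argument the paper has in mind: the paper omits the proof, citing \cite[p.~2308]{oseledets2011tensor} and remarking it follows easily from the definitions, and the standard derivation there is precisely your block-matrix verification. Your induction showing the partial product $\core{C}{1}_{i_1}\cdots\core{C}{k}_{i_k}$ stays a horizontal concatenation of the two chains, followed by the collapse against the stacked last core, fills in that omitted bookkeeping correctly, including the conformability of the summed ranks and the boundary cases $r_0=r_d=1$.
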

    The proof of \cref{lem:rank1,lem:addtt} can be easily derived from \cref{def:contraction,def:t-m-product,def:rank1tt} and the definition of the tensor train decomposition.

    Based on \cref{equ_split,lem:rank1,lem:addtt}, we have the following theorem.
    
    \begin{theorem}\label{thm:trans}
        \overfullrule=0pt    A sparse tensor $\ten{A}\inR{n_1\times n_2\times\ldots\times n_d}$ can be transformed into an equivalent tensor train with TT rank $R$, where $R$ is the number of nonzero $p$-fiber in $\ten{A}$ $(1\le p \le d)$. If $p \neq 1$ or $d$,
        \begin{equation}
            \label{eqn:rankR}
            \ten{A}=\core{P}{1}\circ\ldots\circ\core{P}{p-1}\circ\ten{V}\circ\core{P}{p+1}\circ\ldots\circ\core{P}{d},
        \end{equation}
        where
        $\core{P}{k}\in\{0,1\}^{R\times n_j\times R}, (2\le k \le d, k\ne p)$, 
        $\core{P}{1}\in\{0,1\}^{1\times n_1\times R}$, $\core{P}{d}\in\{0,1\}^{R\times n_d\times1}$, and $\ten{V}\inR{R\times n_p\times R}$.
        Similar expressions hold for the situations with $p=1$ or $d$.
    \end{theorem}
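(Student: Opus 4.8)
The plan is to construct the tensor train explicitly from the rank-1 decomposition in \cref{equ_split} and then assemble it by repeatedly applying the additive construction of \cref{lem:addtt}. First I would enumerate the $R$ nonzero $p$-fibers and index them by $r=1,\ldots,R$, writing the fixed indices of the $r$-th fiber as $(i_1^{(r)},\ldots,i_{p-1}^{(r)},i_{p+1}^{(r)},\ldots,i_d^{(r)})$ and its data as $\mat{v}^{(r)}\inR{n_p}$. By \cref{equ_split}, $\ten{A}$ is then the sum over $r$ of the rank-1 tensors $\ten{A}_r=\mat{e}_{i_1^{(r)}}\circ\cdots\circ\mat{e}_{i_{p-1}^{(r)}}\circ\mat{v}^{(r)}\circ\mat{e}_{i_{p+1}^{(r)}}\circ\cdots\circ\mat{e}_{i_d^{(r)}}$.

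Next I would apply \cref{lem:rank1} to each $\ten{A}_r$, obtaining a TT of rank $1$ whose $k$-th core ($k\ne p$) is the reshaped basis vector $\mat{e}_{i_k^{(r)}}$ and whose $p$-th core is the reshaped fiber $\mat{v}^{(r)}$. Fixing the middle index of these rank-1 cores yields scalars: the $k$-th core ($k\ne p$) evaluated at index $j$ equals $\delta_{j,i_k^{(r)}}\in\{0,1\}$, while the $p$-th core evaluated at $j$ equals $v^{(r)}_j$. I would then sum the $R$ rank-1 TTs by iterating \cref{lem:addtt}. Because every summand has TT rank $1$, the interior block-diagonal cores have size $R\times R$, so the assembled TT has rank $R$, while $r_0=r_d=1$ is preserved because the first and last cores are built by horizontal/vertical concatenation rather than block-diagonal stacking.

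Evaluating the assembled cores at a fixed middle index then gives the claimed structure. For an interior $k\ne p$, the block-diagonal stacking produces the diagonal matrix $\core{P}{k}_{j}=\mathrm{diag}\!\left(\delta_{j,i_k^{(1)}},\ldots,\delta_{j,i_k^{(R)}}\right)$, which is $\{0,1\}$-valued; for $k=1$ or $k=d$ (with $k\ne p$) the concatenation produces the corresponding row or column vector of the same indicators, again $\{0,1\}$-valued; and for the $p$-th core the stacking produces $\ten{V}_{j}=\mathrm{diag}\!\left(v^{(1)}_j,\ldots,v^{(R)}_j\right)\inR{R\times R}$, carrying the fiber data. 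This establishes the formats $\core{P}{k}\in\{0,1\}^{R\times n_k\times R}$, $\core{P}{1}\in\{0,1\}^{1\times n_1\times R}$, $\core{P}{d}\in\{0,1\}^{R\times n_d\times1}$, and $\ten{V}\inR{R\times n_p\times R}$, and hence the identity \cref{eqn:rankR}.

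The step I expect to require the most care is the bookkeeping in the last paragraph: tracking how the two boundary cases of \cref{lem:addtt} (concatenation at the two ends versus block-diagonal stacking in the interior) interact with the position $p$ of the fiber core, so that the indicator/diagonal patterns and the stated index ranges come out exactly right. The degenerate situations $p=1$ and $p=d$ follow the identical argument, with the fiber data landing in the first or last core respectively; I would remark that they are handled analogously rather than writing them out in full.
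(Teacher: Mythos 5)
Your proposal is correct and follows essentially the same route as the paper, which derives \cref{thm:trans} directly from the fiber-sum representation \cref{equ_split}, the rank-1 TT conversion of \cref{lem:rank1}, and repeated application of the TT-addition formula \cref{lem:addtt}; your explicit bookkeeping of the resulting diagonal indicator cores $\core{P}{k}_{j}$ and the data core $\ten{V}_{j}$ matches exactly the structure the paper depicts in \cref{fig:pv}. Nothing further is needed.
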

    
    The TT cores $\core{P}{k}$ and $\ten{V}$ in \cref{thm:trans} are sparse tensors, whose nonzero distributions are illustrated in \cref{fig:pv}. Each horizontal bar depicted in \cref{fig:pv} is a standard basis vector $\mat{e}_{i_k}$ for $\core{P}{k}$ or a $p$-fiber $\mat{v}$ for $\ten{V}$. The derived matrices ($\core{P}{k}_{i_k}$ and $\ten{V}_{i_p}$) from these TT cores are all diagonal matrices. Furthermore, each of the $\core{P}{k}$ cores is very sparse, as the nonzero elements consist of only $R$ 1's.
    \begin{figure}[!htbp]
        \centering
        \subfloat[$\core{P}{k},(2\le k \le d, k\ne p)$] {\includegraphics[height=1.85in]{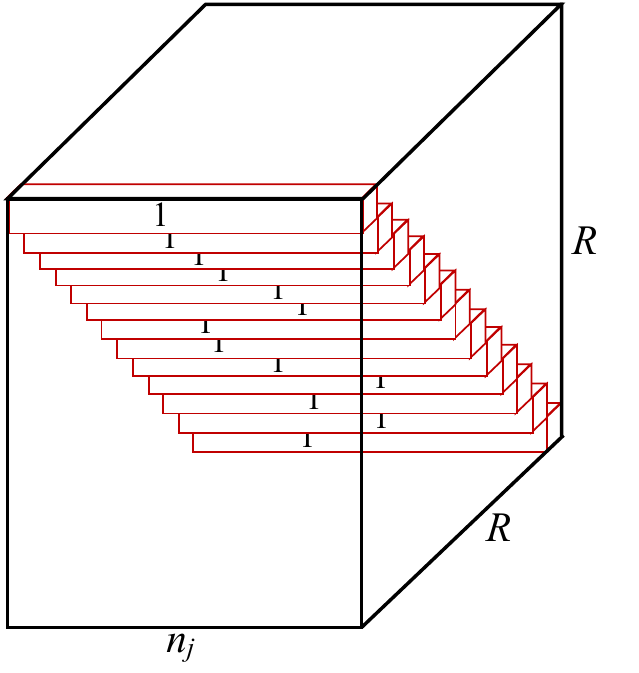}}
        \subfloat[$\ten{V}$] {\includegraphics[height=1.85in]{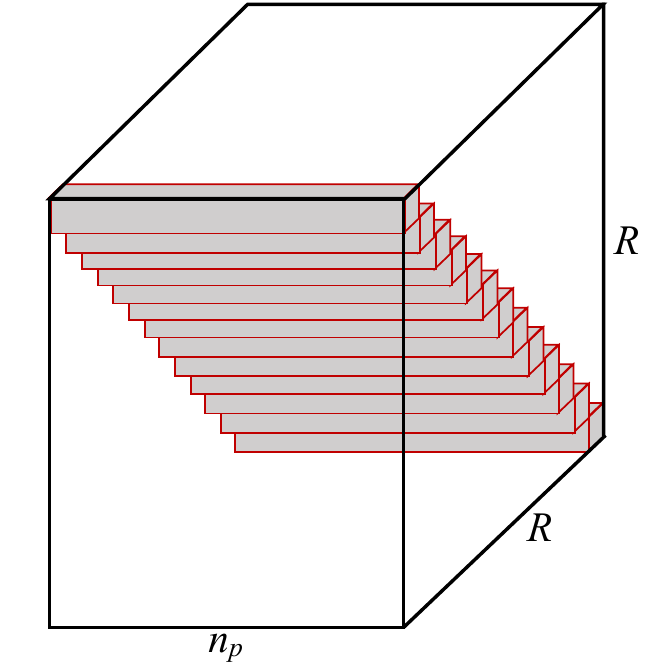}}
        \caption{The nonzero distributions of $\core{P}{k},(2\le k \le d, k\ne p)$ and $\ten{V}$.}
        \label{fig:pv}
    \end{figure}
    
    \subsection{More efficient parallel-vector rounding}
    
    From \cref{fig:pv}, it is obvious that $\core{P}{k}$ are very sparse and the elements are arranged regularly. Hence parallel-vector rounding should be very effective on the TT obtained in \cref{thm:trans}. In order to maximize the effect of Deparallelisation, we modify the original algorithm so that the decomposition on the less regular core $\ten{V}$ is avoided. By combining this modified algorithm with \cref{thm:trans}, we obtain a lossless sparse tensor to TT conversion algorithm, described as \cref{alg:dedup_rounding}, where \textit{Depar} refers to the Deparallelisation algorithm.
    
    \begin{algorithm}[!htbp]
        \caption{Sparse TT conversion with modified parallel-vector rounding}
        \label{alg:dedup_rounding}
        \begin{algorithmic}[1]
            \setcounter{ALC@unique}{0}
            \REQUIRE A sparse tensor $\ten{A}\inR{n_1\times n_2\times\ldots\times n_d}$, an integer $p$ ($1\le p \le d$).
            \ENSURE Core tensors $\core{G}{1},\ldots,\core{G}{d}$ of TT-format tensor $\ten{B}$ which is equivalent to $\ten{A}$ with TT ranks $\tilde{r}_k$ ($k=0, 1, \cdots, d$).
            \STATE Initialize empty cores $\core{G}{1},\ldots,\core{G}{d}$ for TT-format tensor $\ten{B}$.
            \FOR{every $\mat{v}\in$ \{all $R$ nonzero $p$-subvectors of $\ten{A}$\}}
            \STATE Determine $(d-1)$ $\mat{e}_i$ vectors in \cref{equ_split}.
            \STATE Construct rank-1 TT $\ten{T}$ with $\mat{v}$ and $\mat{e}$ vectors as \cref{lem:rank1}.
            \STATE $\ten{B}\coloneqq\ten{B}+\ten{T}$, which means $\core{G}{1},\ldots,\core{G}{d}$ are update with \cref{equ_ttsum}.
            \ENDFOR
            \STATE $\tilde{r}_0\coloneqq1$.
            \FOR{$k=1,\ldots,p-1$}
            \STATE $[\mat{N},\mat{T}]\coloneqq\operatorname{Depar}(\spl_2(\core{G}{k}))$, where $\mat{N}\inR{\tilde{r}_{k-1}n_k\times \tilde{r}_k},\mat{T}\inR{\tilde{r}_k\times R}$.
            \STATE $\core{G}{k}\coloneqq\reshape(\mat{N},[\tilde{r}_{k-1},n_k,\tilde{r}_k])$.
            \STATE $\core{G}{k+1}\coloneqq\core{G}{k+1}\times_1\mat{T}^T$.
            \ENDFOR
            \STATE $\tilde{r}_d\coloneqq1$.
            \FOR{$k=d,\ldots,p+1$}
            \STATE $[\mat{N},\mat{T}]\coloneqq\operatorname{Depar}(\spl_1^T(\core{G}{k}))$, where $\mat{N}\inR{\tilde{r}_kn_k\times \tilde{r}_{k-1}},\mat{T}\inR{\tilde{r}_{k-1}\times R}$.
            \STATE $\core{G}{k}\coloneqq\reshape(\mat{N}^T,[\tilde{r}_{k-1},n_k,\tilde{r}_k])$.
            \STATE $\core{G}{k-1}\coloneqq\core{G}{k-1}\times_3\mat{T}^T$.
            \ENDFOR
            \STATE Return tensor $\ten{B}$ in TT-format with cores $\core{G}{1},\ldots,\core{G}{d}$.
        \end{algorithmic}
    \end{algorithm}
    
    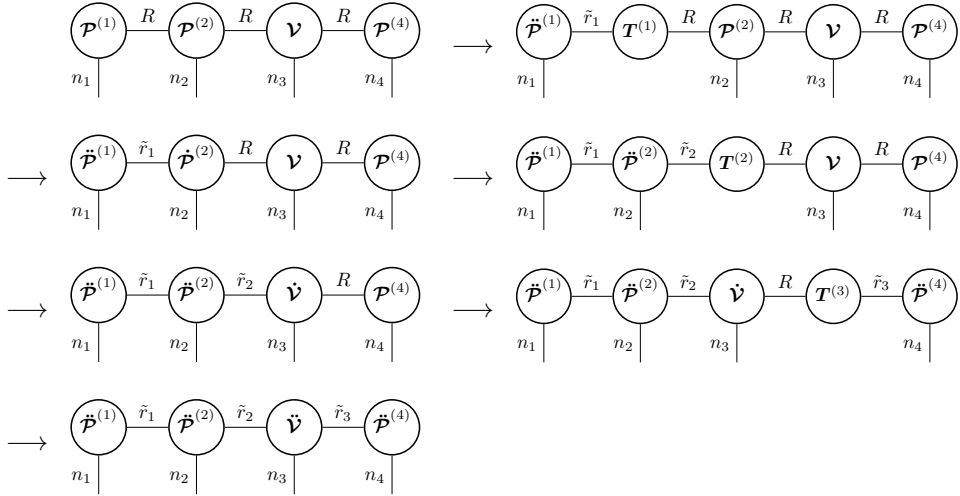
\begin{figure}[!htbp]
        \centering
        \tikzstyle{tensor} = [circle, thick, inner sep=0pt, minimum size=1cm, draw=black]
        \tikzstyle{null} = [circle, minimum size=0cm]
        \begingroup
        \renewcommand*{\arraystretch}{0}
        \begin{tabularx}{\textwidth}{Xp{0.39\textwidth}Xp{0.48\textwidth}}
            &
            \resizebox{0.45\textwidth}{!}{%
                \begin{tikzpicture}[text height=1.5ex, text depth=0.25ex, auto, node distance=1.5cm, ampersand replacement=\&]
                    \matrix[column sep=0.75cm] {
                        \node[tensor] (g1) {$\core{P}1$}; \&
                        \node[tensor] (g2) {$\core{P}2$}; \&
                        \node[tensor] (g3) {$\ten{V}$}; \&
                        \node[tensor] (g4) {$\core{P}4$}; \&
                        \\
                    };
                    \node[null] (n1) [below of=g1] {};
                    \node[null] (n2) [below of=g2] {};
                    \node[null] (n3) [below of=g3] {};
                    \node[null] (n4) [below of=g4] {};
                    \path[every node/.style={above}]
                    (g1) edge node{$R$} (g2)
                    (g2) edge node{$R$} (g3)
                    (g3) edge node{$R$} (g4)
                    ;
                    \path[every node/.style={left}]
                    (g1) edge node{$n_1$} (n1)
                    (g2) edge node{$n_2$} (n2)
                    (g3) edge node{$n_3$} (n3)
                    (g4) edge node{$n_4$} (n4)
                    ;
                \end{tikzpicture}
            } &
            \raisebox{1cm}{$\longrightarrow$} &
            \raisebox{0cm}{\resizebox{0.55\textwidth}{!}{%
                    \begin{tikzpicture}[text height=1.5ex, text depth=0.25ex, auto, node distance=1.5cm, ampersand replacement=\&]
                        \matrix[column sep=0.75cm] {
                            \node[tensor] (g1) {$\core{\ddot{P}}1$}; \&
                            \node[tensor] (q) {$\mat{T}^{(1)}$}; \&
                            \node[tensor] (g2) {$\core{P}2$}; \&
                            \node[tensor] (g3) {$\ten{V}$}; \&
                            \node[tensor] (g4) {$\core{P}4$}; \&
                            \\
                        };
                        \node[null] (n1) [below of=g1] {};
                        \node[null] (n2) [below of=g2] {};
                        \node[null] (n3) [below of=g3] {};
                        \node[null] (n4) [below of=g4] {};
                        \path[every node/.style={above}]
                        (g1) edge node{$\tilde{r}_1$} (q)
                        (q) edge node{$R$} (g2)
                        (g2) edge node{$R$} (g3)
                        (g3) edge node{$R$} (g4)
                        ;
                        \path[every node/.style={left}]
                        (g1) edge node{$n_1$} (n1)
                        (g2) edge node{$n_2$} (n2)
                        (g3) edge node{$n_3$} (n3)
                        (g4) edge node{$n_4$} (n4)
                        ;
                    \end{tikzpicture}
            }} \\
            \raisebox{1cm}{$\longrightarrow$} &
            \resizebox{0.45\textwidth}{!}{%
                \begin{tikzpicture}[text height=1.5ex, text depth=0.25ex, auto, node distance=1.5cm, ampersand replacement=\&]
                    \matrix[column sep=0.75cm] {
                        \node[tensor] (g1) {$\core{\ddot{P}}1$}; \&
                        \node[tensor] (g2) {$\core{\dot{P}}2$}; \&
                        \node[tensor] (g3) {$\ten{V}$}; \&
                        \node[tensor] (g4) {$\core{P}4$}; \&
                        \\
                    };
                    \node[null] (n1) [below of=g1] {};
                    \node[null] (n2) [below of=g2] {};
                    \node[null] (n3) [below of=g3] {};
                    \node[null] (n4) [below of=g4] {};
                    \path[every node/.style={above}]
                    (g1) edge node{$\tilde{r}_1$} (g2)
                    (g2) edge node{$R$} (g3)
                    (g3) edge node{$R$} (g4)
                    ;
                    \path[every node/.style={left}]
                    (g1) edge node{$n_1$} (n1)
                    (g2) edge node{$n_2$} (n2)
                    (g3) edge node{$n_3$} (n3)
                    (g4) edge node{$n_4$} (n4)
                    ;
                \end{tikzpicture}
            } &
            \raisebox{1cm}{$\longrightarrow$} &
            \raisebox{0cm}{\resizebox{0.55\textwidth}{!}{%
                    \begin{tikzpicture}[text height=1.5ex, text depth=0.25ex, auto, node distance=1.5cm, ampersand replacement=\&]
                        \matrix[column sep=0.75cm] {
                            \node[tensor] (g1) {$\core{\ddot{P}}1$}; \&
                            \node[tensor] (g2) {$\core{\ddot{P}}2$}; \&
                            \node[tensor] (q) {$\mat{T}^{(2)}$}; \&
                            \node[tensor] (g3) {$\ten{V}$}; \&
                            \node[tensor] (g4) {$\core{P}4$}; \&
                            \\
                        };
                        \node[null] (n1) [below of=g1] {};
                        \node[null] (n2) [below of=g2] {};
                        \node[null] (n3) [below of=g3] {};
                        \node[null] (n4) [below of=g4] {};
                        \path[every node/.style={above}]
                        (g1) edge node{$\tilde{r}_1$} (g2)
                        (g2) edge node{$\tilde{r}_2$} (q)
                        (q) edge node{$R$} (g3)
                        (g3) edge node{$R$} (g4)
                        ;
                        \path[every node/.style={left}]
                        (g1) edge node{$n_1$} (n1)
                        (g2) edge node{$n_2$} (n2)
                        (g3) edge node{$n_3$} (n3)
                        (g4) edge node{$n_4$} (n4)
                        ;
                    \end{tikzpicture}
            }} \\
            \raisebox{1cm}{$\longrightarrow$} &
            \resizebox{0.45\textwidth}{!}{%
                \begin{tikzpicture}[text height=1.5ex, text depth=0.25ex, auto, node distance=1.5cm, ampersand replacement=\&]
                    \matrix[column sep=0.75cm] {
                        \node[tensor] (g1) {$\core{\ddot{P}}1$}; \&
                        \node[tensor] (g2) {$\core{\ddot{P}}2$}; \&
                        \node[tensor] (g3) {$\ten{\dot{V}}$}; \&
                        \node[tensor] (g4) {$\core{P}4$}; \&
                        \\
                    };
                    \node[null] (n1) [below of=g1] {};
                    \node[null] (n2) [below of=g2] {};
                    \node[null] (n3) [below of=g3] {};
                    \node[null] (n4) [below of=g4] {};
                    \path[every node/.style={above}]
                    (g1) edge node{$\tilde{r}_1$} (g2)
                    (g2) edge node{$\tilde{r}_2$} (g3)
                    (g3) edge node{$R$} (g4)
                    ;
                    \path[every node/.style={left}]
                    (g1) edge node{$n_1$} (n1)
                    (g2) edge node{$n_2$} (n2)
                    (g3) edge node{$n_3$} (n3)
                    (g4) edge node{$n_4$} (n4)
                    ;
                \end{tikzpicture}
            } &
            \raisebox{1cm}{$\longrightarrow$} &
            \raisebox{0cm}{\resizebox{0.55\textwidth}{!}{%
                    \begin{tikzpicture}[text height=1.5ex, text depth=0.25ex, auto, node distance=1.5cm, ampersand replacement=\&]
                        \matrix[column sep=0.75cm] {
                            \node[tensor] (g1) {$\core{\ddot{P}}1$}; \&
                            \node[tensor] (g2) {$\core{\ddot{P}}2$}; \&
                            \node[tensor] (g3) {$\ten{\dot{V}}$}; \&
                            \node[tensor] (q) {$\mat{T}^{(3)}$}; \&
                            \node[tensor] (g4) {$\core{\ddot{P}}4$}; \&
                            \\
                        };
                        \node[null] (n1) [below of=g1] {};
                        \node[null] (n2) [below of=g2] {};
                        \node[null] (n3) [below of=g3] {};
                        \node[null] (n4) [below of=g4] {};
                        \path[every node/.style={above}]
                        (g1) edge node{$\tilde{r}_1$} (g2)
                        (g2) edge node{$\tilde{r}_2$} (g3)
                        (g3) edge node{$R$} (q)
                        (q) edge node{$\tilde{r}_3$} (g4)
                        ;
                        \path[every node/.style={left}]
                        (g1) edge node{$n_1$} (n1)
                        (g2) edge node{$n_2$} (n2)
                        (g3) edge node{$n_3$} (n3)
                        (g4) edge node{$n_4$} (n4)
                        ;
                    \end{tikzpicture}
            }} \\
            \raisebox{1cm}{$\longrightarrow$} &
            \resizebox{0.45\textwidth}{!}{%
                \begin{tikzpicture}[text height=1.5ex, text depth=0.25ex, auto, node distance=1.5cm, ampersand replacement=\&]
                    \matrix[column sep=0.75cm] {
                        \node[tensor] (g1) {$\core{\ddot{P}}1$}; \&
                        \node[tensor] (g2) {$\core{\ddot{P}}2$}; \&
                        \node[tensor] (g3) {$\ten{\ddot{V}}$}; \&
                        \node[tensor] (g4) {$\core{\ddot{P}}4$}; \&
                        \\
                    };
                    \node[null] (n1) [below of=g1] {};
                    \node[null] (n2) [below of=g2] {};
                    \node[null] (n3) [below of=g3] {};
                    \node[null] (n4) [below of=g4] {};
                    \path[every node/.style={above}]
                    (g1) edge node{$\tilde{r}_1$} (g2)
                    (g2) edge node{$\tilde{r}_2$} (g3)
                    (g3) edge node{$\tilde{r}_3$} (g4)
                    ;
                    \path[every node/.style={left}]
                    (g1) edge node{$n_1$} (n1)
                    (g2) edge node{$n_2$} (n2)
                    (g3) edge node{$n_3$} (n3)
                    (g4) edge node{$n_4$} (n4)
                    ;
                \end{tikzpicture}
            }
        \end{tabularx}
        \endgroup
        \caption{The graphical representations of the decomposition forms during the \cref{alg:dedup_rounding} execution for a $4$-way TT $(p=3)$}\label{fig:dedup}
    \end{figure}
    
    The correctness of \cref{alg:dedup_rounding} is due to \cref{thm:trans} and the associative property of matrix multiplications. The graphical representations of the decomposition forms during the algorithm execution are shown in \cref{fig:dedup} for a 4-way TT.
    
    The original Deparallelisation algorithm (\cref{alg:depar}) dose not consider the special pattern of the core tensors. An important observation from \cref{fig:pv} is that each $\core{P}{k}$ obtained by \cref{thm:trans} is consisted of $R$ ``diagonally'' $2$-fiber with only one ``1''. This means all unfoldings $\spl_2(\core{P}{k})$ for $k<p$ and $\spl_1^T(\core{P}{k})$ for $k>p$ are so-called \textit{quasi-permutation matrices}.
    
    \begin{definition}{\textbf{Quasi-permutation matrix.}}\label{def:qp}
        If each column of a matrix has only one nonzero element with a value of 1, then the matrix is called a quasi-permutation matrix. A quasi-permutation matrix $\mat{A}\inR{n\times m}$ can be represented as
        \begin{align}
            \mat{A}=\begin{bmatrix}
                \mat{e}_{i_1}&\mat{e}_{i_2}&\cdots&\mat{e}_{i_m}
            \end{bmatrix},
        \end{align}
        where $e_k$ denotes a standard basis vector in the $n$-dimensional Euclidean space $\mathbb{R}^n$ with a 1 in the $k$th coordinate and 0's elsewhere.
    \end{definition}
    \begin{exmp}
        Obviously, a permutation or identity matrix belongs to the class of quasi-permutation matrices.
    \end{exmp}
    \begin{corollary}\label{cor:qp}
        For \cref{alg:depar}, if the input matrix $\mat{M}$ is a quasi-permutation matrix, matrix $\mat{N}$ and the transfer matrix $\mat{T}$ will also be quasi-permutation matrices.
    \end{corollary}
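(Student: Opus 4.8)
The plan is to trace the execution of \cref{alg:depar} on a quasi-permutation input $\mat{M}$ and verify directly that both outputs inherit the defining structure of \cref{def:qp}. First I would invoke that representation: by \cref{def:qp}, every column $\mat{M}_{:j}$ is a standard basis vector $\mat{e}_{i_j}$, i.e. it has a single entry equal to $1$ and zeros elsewhere. The whole argument then rests on understanding what the parallelism test in the inner loop does to such columns.

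The key observation is that two standard basis vectors $\mat{e}_i$ and $\mat{e}_{i'}$ are parallel if and only if $i = i'$, and in that case the prefactor relating them is exactly $1$. This is the crux of the proof: it guarantees that whenever the algorithm writes a prefactor into $\mat{T}$, that value is $1$ rather than some arbitrary scalar. I would state this explicitly, since it is the only place where the special value $1$ (as opposed to a generic nonzero scalar) is used.

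From here both conclusions follow by inspection of the loop. For $\mat{N}$: the kept set $K$ only ever accumulates columns that are pairwise non-parallel, so it is a set of distinct standard basis vectors; concatenating them horizontally yields a matrix each of whose columns is a standard basis vector, hence a quasi-permutation matrix. For $\mat{T}$: in the ``add'' branch a newly kept column contributes $T_{|K|,j} = 1$, while in the ``parallel'' branch a column identified as parallel to the kept column $K_i$ contributes $T_{i,j}=$ (prefactor) $= 1$ by the observation above. In either branch, column $j$ of $\mat{T}$ has exactly one nonzero entry, equal to $1$, so $\mat{T}$ likewise satisfies \cref{def:qp}.

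There is no genuinely hard step in this corollary; it is essentially a structural bookkeeping argument. The only point that requires care is the explicit verification that the prefactor between two parallel basis columns equals $1$, because without it $\mat{T}$ could in principle contain non-unit entries and fail to be a quasi-permutation matrix. Everything else is a direct reading of the two branches of the Deparallelisation loop.
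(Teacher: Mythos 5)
Your proof is correct and is exactly the direct verification the paper intends: the paper states \cref{cor:qp} without an explicit proof, treating it as immediate from \cref{def:qp} and inspection of \cref{alg:depar}, and your argument supplies precisely that reasoning. Your emphasis on the one non-trivial point --- that two standard basis vectors are parallel only when they are equal, forcing every recorded prefactor in $\mat{T}$ to be $1$ --- is well placed, as it is the only step where the quasi-permutation property of $\mat{T}$ (rather than mere sparsity) could fail.
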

    
    It turns out that this property of $\core{P}{k}$ is maintained throughout the whole process of parallel-vector rounding, which will enable a more efficient implementation of Deparallelisation.
    
    

    \begin{theorem}\label{thm:dedup}
        In \cref{alg:dedup_rounding}, each input matrix of the function \textbf{Depar} is a quasi-permutation matrix.
    \end{theorem}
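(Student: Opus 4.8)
The plan is to run an induction along each of the two loops of \cref{alg:dedup_rounding} separately, carrying the invariant that whenever \textbf{Depar} is invoked its argument is a quasi-permutation matrix. For the forward loop I would track the statement ``$\spl_2(\core{G}{k})$ is a quasi-permutation matrix'' as $k$ runs over $1,\dots,p-1$, and for the backward loop the statement ``$\spl_1^T(\core{G}{k})$ is a quasi-permutation matrix'' as $k$ runs over $d,\dots,p+1$; these are exactly the matrices fed to \textbf{Depar}, so establishing the two invariants proves the claim. Note at the outset that $\ten{V}=\core{G}{p}$ is only ever updated by an incoming transfer matrix and is never itself passed to \textbf{Depar}, so its unfolding plays no role. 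For the base cases I would invoke \cref{thm:trans}: the constructed cores $\core{P}{k}$ $(k\neq p)$ have diagonal slices $\core{P}{k}_{i_k}$, which is precisely the assertion that $\spl_2(\core{P}{k})$ $(k<p)$ and $\spl_1^T(\core{P}{k})$ $(k>p)$ are quasi-permutation matrices (cf.\ \cref{def:qp}); in particular the first calls of the two loops, on $\core{G}{1}=\core{P}{1}$ and $\core{G}{d}=\core{P}{d}$, receive quasi-permutation matrices.

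For the inductive step of the forward loop, assume $\spl_2(\core{G}{k})$ is a quasi-permutation matrix. Then by \cref{cor:qp} the transfer matrix $\mat{T}$ returned by \textbf{Depar} is a quasi-permutation matrix as well. The core reached at the next iteration is the original $\core{P}{k+1}$ after the single update $\core{G}{k+1}\coloneqq\core{G}{k+1}\times_1\mat{T}^T$ (no earlier forward iteration and no backward iteration has touched it), and the heart of the argument is that this update keeps $\spl_2$ a quasi-permutation matrix. Expanding the tensor--matrix product through \cref{def:t-m-product,def:contraction}, the entry with right-rank index $t$ is
\begin{equation*}
    c_{j,i,t}=\sum_{l}\left(\core{G}{k+1}\right)_{l,i,t}\,T_{j,l}.
\end{equation*}
Since $\spl_2(\core{G}{k+1})$ is a quasi-permutation matrix, for each fixed column index $t$ the factor $\left(\core{G}{k+1}\right)_{l,i,t}$ has a single unit entry, say at $(l^\ast,i^\ast)$; since $\mat{T}$ is a quasi-permutation matrix its column $l^\ast$ has a single unit entry, say at row $j^\ast$. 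Hence for each $t$ exactly one pair $(j,i)=(j^\ast,i^\ast)$ yields $c_{j,i,t}=1$ while all other entries of that column vanish, so $\spl_2(\core{G}{k+1})$ is again a quasi-permutation matrix and the induction closes. The backward loop is handled by the mirror-image computation, with $\spl_1^T$ replacing $\spl_2$ and the update $\times_3\mat{T}^T$ replacing $\times_1\mat{T}^T$.

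I expect the only real difficulty to lie in the index bookkeeping of this crux: one must confirm that $\times_1\mat{T}^T$ mixes only the left-rank mode while leaving the right-rank mode---which indexes the columns of $\spl_2$---untouched, so that the ``single unit per column'' property is exactly the product of the two single-unit factors. Granting that, the theorem is simply the base case from \cref{thm:trans} combined with the propagation guarantee of \cref{cor:qp}, iterated along each sweep.
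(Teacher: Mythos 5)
Your proof is correct and follows essentially the same route as the paper's: an induction along each sweep whose base case is the structure of the cores $\core{P}{k}$ from \cref{thm:trans}, with \cref{cor:qp} supplying that each transfer matrix $\mat{T}$ is quasi-permutation, and the crux being that the single update $\times_1\mat{T}^T$ (resp.\ $\times_3\mat{T}^T$) preserves the quasi-permutation property of the unfolding fed to \textbf{Depar}. The only presentational difference is that the paper verifies this preservation by displaying the explicit block structure of $\spl_1(\core{P}{k})$ and then of $\spl_2(\core{\dot{P}}{k})$, whereas you perform the equivalent entrywise single-unit-per-column computation; the substance is identical.
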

    
    \begin{proof}
        $\core{G}{1}$ is definitely a quasi-permutation matrix according to \cref{equ_rank1,equ_ttsum}. For $2\le k<p$, $\core{G}{k}$ changes twice during \cref{alg:dedup_rounding} --- once in iteration $k-1$ and once in iteration $k$. Like in \cref{fig:dedup}, we use $\core{P}{k}$, $\core{\dot{P}}{k}$ and $\core{\ddot{P}}{k}$ to denote the three stages of $\core{G}{k}$. The input matrix of \textit{Depar} in iteration $k$ is $\spl_2(\core{\dot{P}}{k})$. $\core{\dot{P}}{k}$ is computed as $\core{P}{k}\times_1\mat{T}^T$ in iteration $k-1$, which is equivalent to
        \begin{align}
            \spl_1(\core{\dot{P}}{k})=\mat{T}\times\spl_1(\core{P}{k}).
        \end{align}
        
        We can deduce from \cref{fig:pv} that $\spl_1(\core{P}{k})$ has the following structure
        \begin{equation*}
            \left[\begin{array}{ccccccccccccc}
                x_{11} & 0 & \cdots & 0 & x_{12} & 0 & \cdots & 0 & \cdots & x_{1n_k} & 0 & \cdots & 0 \\
                0 & x_{21} & \cdots & 0 & 0 & x_{22} & \cdots & 0 & \cdots & 0 & x_{2n_k} & \cdots & 0 \\
                \vdots & \vdots & \ddots & \vdots & \vdots & \vdots & \ddots & \vdots & \cdots & \vdots & \vdots & \ddots & \vdots \\
                0 & 0 & \cdots & x_{R1} & 0 & 0 & \cdots & x_{R2} & \cdots & 0 & 0 & \cdots & x_{Rn_k} \\
            \end{array}\right],
        \end{equation*}
        where $\forall j=1,2,\ldots,R$, vector $[x_{j1}~x_{j2}~\cdots~x_{jn_k}]^T$ is a particular standard basis vector. Let $\mat{T}=[\mat{e}_{i_1}~\mat{e}_{i_2}~\cdots~\mat{e}_{i_R}]$ according to \cref{cor:qp}. Then $\spl_1(\core{\dot{P}}{k})$ will have the structure
        \begin{equation*}
            \begin{bmatrix}
                x_{11}\mat{e}_{i_1} & x_{21}\mat{e}_{i_2} & \cdots & x_{R1}\mat{e}_{i_R} & \cdots & x_{1n_d}\mat{e}_{i_1} & x_{2n_d}\mat{e}_{i_2} & \cdots & x_{Rn_k}\mat{e}_{i_R}
            \end{bmatrix},
        \end{equation*}
        and thus the structure of $\spl_2(\core{\dot{P}}{k})$ will be
        \begin{equation*}
            \begin{bmatrix}
                x_{11}\mat{e}_{i_1} & x_{21}\mat{e}_{i_2} & \cdots & x_{R1}\mat{e}_{i_R} \\
                x_{12}\mat{e}_{i_1} & x_{22}\mat{e}_{i_2} & \cdots & x_{R2}\mat{e}_{i_R} \\
                \vdots & \vdots & \ddots & \vdots \\
                x_{1n_k}\mat{e}_{i_1} & x_{2n_k}\mat{e}_{i_2} & \cdots & x_{Rn_k}\mat{e}_{i_R}
            \end{bmatrix}.
        \end{equation*}
        From this it follows that $\spl_2(\core{\dot{P}}{k})$ is a quasi-permutation matrix. The same line of reasoning can be used to prove the theorem for $k > p$.
    \end{proof}
    
    Now, we consider how to perform Deparallelisation for a quasi-permutation matrix. Our aim is to express a matrix $\mat{M}$ as the product of two smaller matrices: $\mat{M=NT}$. For a quasi-permutation matrix, we need to remove the duplicate columns in $\mat{M}$. As shown in \cref{cor:qp}, the result $\mat{T}$ itself is a quasi-permutation matrix. Therefore, $\mat{N=I}$ and $\mat{T=M}$, where $\mat{I}$ is an identity matrix, can be regarded as the result of performing Deparallelisation on a quasi-permutation matrix, except that the duplicate columns in $\mat{M}$ have not yet been removed. What remains to be done is the removal of zero rows of $\mat{T}$ and the corresponding columns in $\mat{N}$. This is described as \cref{alg:dedup}.
    \begin{algorithm}[!htbp]
        \caption{Deparallelisation for a quasi-permutation matrix}
        \label{alg:dedup}
        \begin{algorithmic}[1]
            \setcounter{ALC@unique}{0}
            \REQUIRE A quasi-permutation matrix $\mat{M}\inR{n_1\times n_2}$.
            \ENSURE Matrices $\mat{N}\inR{n_1\times\beta}$, $\mat{T}\inR{\beta\times n_2}$ so that $\mat{M}=\mat{N}\mat{T}$, and $\mat{N}$ includes  nonduplicate columns of $\mat{M}$.
            \STATE $\beta \coloneqq 0$.
            \STATE Let $\mat{N}\inR{n_1\times\beta}$, $\mat{T}\inR{\beta\times n_2}$ be two  dynamically resized transfer matrices.
            \FOR{$i= 1, 2, \cdots, n_1$}
            \IF{$\mat{M}_{i,:}$ is not a zero row}   
            \STATE $\beta \coloneqq \beta+1$.  \COMMENT{extend matrix $\mat{N}$ and $\mat{T}$}
            \STATE $\mat{T}_{\beta,:} \coloneqq \mat{M}_{i,:}$.
            \STATE Set $\mat{N}_{:,\beta}$ a zero column except $\mat{N}_{i,\beta}=1$.
            \ENDIF
            \ENDFOR
            \STATE Return $\mat{N}$ and $\mat{T}$.
        \end{algorithmic}
    \end{algorithm}
    
    For a quasi-permutation matrix, each column can be represented by the position of 1 in it. Thus, \cref{alg:dedup} has a time complexity of $O(n_1+n_2)$, where $n_1$ and $n_2$ are the dimensions of $\mat{M}$. It can be executed much more efficiently than a general Deparallelisation algorithm. From \cref{alg:dedup} we can also observe, that the resulting matrix size $\beta$ must be no more than $n_1$, even if $n_2\gg n_1$.

    According to \cref{thm:dedup} and the above analysis, with \cref{alg:dedup_rounding} the TT ranks will be reduced to $\tilde{\mat{r}}$ satisfying the following upper bounds
    
    \begin{align}\label{equ:upper}
        \tilde{r}_k\le\bar{r}_k=\left\{\begin{array}{ll}
            \min\left(R,\prod_{i=1}^{k}n_i\right)&\text{if }1\le k<p,\\[2ex]
            \min\left(R,\prod_{i=k+1}^{d}n_i\right)&\text{if }p\le k<d.
        \end{array}\right.
    \end{align}
    where $R$ is the number of nonzero $p$-fibers in the original tensor $\ten{A}$.
    
    \subsection{More efficient TT-rounding and the FastTT algorithm}
    
    \Cref{alg:dedup_rounding} can already provide rank-reduced TT for sparse tensors while keeping the sparsity and with no precision loss. However, if lower ranks are desired, we can further apply TT-rounding on the TT. This could be useful for applications which do not care about the sparsity. Based on the property of TT obtained in \cref{alg:dedup_rounding}, we modified the original \cref{alg:ttro} in order to make it more efficient, described as \cref{alg:tt_rounding}. Instead of performing a right-to-left QR-sweep and then a left-to-right SVD-sweep, we perform 2 middle-to-edge SVD-sweep from core $p$. The QR-sweep in our algorithm is proved to be extremely fast due to the orthogonality obtained in \cref{alg:dedup_rounding} and former SVD, and that is why our algorithm is more efficient. The truncation parameters in \cref{alg:tt_rounding} satisfy
    \begin{align}\label{equ:static}
        \delta_k\coloneqq\frac{\varepsilon}{\sqrt{p-1}+\sqrt{d-p}}\|\ten{A}\|_F,~k=1\ldots d-1.
    \end{align}
    
    \begin{algorithm}[!htbp]
        \caption{More efficient TT-rounding for the sparse TT conversion}
        \label{alg:tt_rounding}
        \begin{algorithmic}[1]
            \setcounter{ALC@unique}{0}
            \REQUIRE Cores $\core{G}{1},\ldots,\core{G}{d}$ of the TT-format tensor $\ten{A}$ with TT-ranks $r_1,\ldots,r_{d-1}$, desired accuracy tolerance $\varepsilon$, an integer $p$ ($1\le p \le d$).
            \ENSURE Cores $\core{G}{1},\ldots,\core{G}{d}$ of the TT-approximation $\ten{B}$ to $\ten{A}$ in the TT-format with TT-ranks $r_1,\ldots,r_{d-1}$. The computed approximation satisfies
            \begin{equation*}
                ||\ten{A}-\ten{B}||_F\le\varepsilon||\ten{A}||_F.
            \end{equation*}
            \STATE Set truncation parameters according to \cref{equ:static}.
            \STATE \textit{(First SVD-sweep)}
            \FOR{$k=p,\ldots,d-1$}
            \STATE $\mat{G}\coloneqq$ $\spl_2(\core{G}{k})$.\label{line:3}
            \STATE Compute $\delta_k$-truncated SVD: $\mat{G}=\mat{U\Sigma V}^T+\mat{E}_k$, $\|\mat{E}_k\|_F\le\delta_k$,\\ $r_k\coloneqq\rank_{\delta_k}(\mat{G})$.\label{line:4}
            \STATE $\core{G}{k}\coloneqq\reshape(\mat{U},[r_{k-1},n_k,r_k])$.
            \STATE $\core{G}{k+1}\coloneqq\core{G}{k+1}\times_1(\mat{V}\mat\Sigma)$.\label{line:merge}
            \ENDFOR
            \STATE \textit{(QR-sweep)}
            \FOR{$k=d,\ldots,p+1$}\label{line:qrb}
            \STATE $\mat{G}\coloneqq$ $\spl_1^T(\core{G}{k})$.
            \STATE Compute economic QR decomposition: $\mat{G}=\mat{Q}\mat{R}$.
            \STATE $\core{G}{k}\coloneqq\reshape(\mat{Q}^T,[r_{k-1},n_k,r_k])$.
            \STATE $\core{G}{k-1}\coloneqq\core{G}{k-1}\times_3\mat{R}^T$.
            \ENDFOR\label{line:qre}
            \STATE \textit{(Second SVD-sweep)}
            \FOR{$k=p,\ldots,2$}
            \STATE $\mat{G}\coloneqq$ $\spl_1^T(\core{G}{k})$.\label{line:15}
            \STATE Compute $\delta_{k-1}$-truncated SVD: $\mat{G}=\mat{U\Sigma V}^T+\mat{E}_{k-1}$, $\|\mat{E}_{k-1}\|_F\le\delta_{k-1}$,\\ $r_{k-1}\coloneqq\rank_{\delta_{k-1}}(\mat{G})$.
            \STATE $\core{G}{k}\coloneqq\reshape(\mat{U}^T,[r_{k-1},n_k,r_k])$.
            \STATE $\core{G}{k-1}\coloneqq\core{G}{k-1}\times_3(\mat{V}\mat\Sigma)$.\label{line:merge2}
            \ENDFOR
            \STATE Return $\core{G}{1},\ldots,\core{G}{d}$ as cores of $\ten{B}$.
        \end{algorithmic}
    \end{algorithm}
    
    The  correctness of \cref{alg:tt_rounding} is explained as follows.
    
    \begin{lemma}\label{lem:qp}
        A quasi-permutation matrix with no duplicate columns is an orthonormal matrix. 
    \end{lemma}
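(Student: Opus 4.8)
The plan is to compute the Gram matrix $\mat{A}^\top\mat{A}$ directly from the column representation of a quasi-permutation matrix and show it equals the identity. First I would invoke \cref{def:qp} to write the matrix as $\mat{A}=[\mat{e}_{i_1}~\mat{e}_{i_2}~\cdots~\mat{e}_{i_m}]\inR{n\times m}$, where each $\mat{e}_{i_j}$ is a standard basis vector of $\mathbb{R}^n$. The hypothesis that $\mat{A}$ has no duplicate columns then translates precisely into the statement that the indices $i_1,\ldots,i_m$ are pairwise distinct, since two columns coincide if and only if they are the same basis vector.

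Next I would observe that the $(j,l)$ entry of $\mat{A}^\top\mat{A}$ is the inner product $\mat{e}_{i_j}^\top\mat{e}_{i_l}$, which equals $1$ when $i_j=i_l$ and $0$ otherwise. Using distinctness of the indices, the off-diagonal entries ($j\ne l$) all vanish, while each diagonal entry ($j=l$) equals $\mat{e}_{i_j}^\top\mat{e}_{i_j}=1$. Hence $\mat{A}^\top\mat{A}=\mat{I}$, which is exactly the statement that the columns of $\mat{A}$ are orthonormal.

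Since the result follows from a one-line computation of the Gram matrix, I do not expect a real technical obstacle; the only point that needs care is the bookkeeping of the hypothesis. I would make explicit that the ``no duplicate columns'' condition is essential, because repeated basis vectors would produce off-diagonal $1$'s in $\mat{A}^\top\mat{A}$ and destroy orthogonality, and that ``orthonormal matrix'' is here understood in the column sense $\mat{A}^\top\mat{A}=\mat{I}$, which does not require $\mat{A}$ to be square. This orthonormality is precisely what justifies the claim, used later in \cref{alg:tt_rounding}, that the QR-sweep is nearly trivial on the cores produced by the parallel-vector rounding.
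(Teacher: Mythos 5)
Your proof is correct and takes essentially the same route as the paper, which simply remarks that the lemma follows directly from \cref{def:qp} and the definition of an orthonormal matrix; your Gram-matrix computation $\mat{A}^\top\mat{A}=\mat{I}$ is exactly that one-line argument spelled out. Your explicit note that ``no duplicate columns'' is equivalent to pairwise-distinct basis-vector indices, and that orthonormality is meant in the rectangular (column) sense, fills in the only details the paper leaves implicit.
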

    
    \Cref{lem:qp} can be easily proved based by the \cref{def:qp} and the definition of an orthonormal matrix. 
    
    
    In Steps 9 and 15 of \cref{alg:dedup_rounding}, the duplicate columns in the input quasi-permutation matrix (according to \cref{thm:dedup}) are removed. Then, based on \cref{lem:qp}, we have the following statement. 
    
    \begin{corollary}\label{cor:lo}
        Supposes the TT cores $\core{G}{k}$ are obtained with \cref{alg:dedup_rounding}. Then the matrices $\spl_2(\core{G}{k}), k<p$ and $\spl_1^T(\core{G}{k}), k>p$ are all orthonormal matrices.
    \end{corollary}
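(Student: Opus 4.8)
The plan is to chain together the three facts already in hand: \cref{thm:dedup}, which guarantees that every matrix passed to \textit{Depar} inside \cref{alg:dedup_rounding} is a quasi-permutation matrix; \cref{cor:qp}, which says the factor $\mat{N}$ returned by \textit{Depar} on a quasi-permutation matrix is again a quasi-permutation matrix; and \cref{lem:qp}, which states that a quasi-permutation matrix with no duplicate columns is orthonormal. The only genuine work is bookkeeping: I must identify, through the two for-loops of \cref{alg:dedup_rounding}, which unfolding of each core ends up literally equal to an $\mat{N}$ produced by \textit{Depar}, and then confirm that this unfolding is never overwritten before the algorithm returns.

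First I would fix $k<p$ and follow the first loop. At its $k$-th iteration, Step 9 calls $[\mat{N},\mat{T}]=\textit{Depar}(\spl_2(\core{G}{k}))$; by \cref{thm:dedup} the argument is a quasi-permutation matrix, so the specialized routine \cref{alg:dedup} applies and, by \cref{cor:qp}, $\mat{N}$ is itself a quasi-permutation matrix, with its duplicate columns removed (which is exactly what \cref{alg:dedup} does). Step 10 then sets $\core{G}{k}=\reshape(\mat{N},[\tilde{r}_{k-1},n_k,\tilde{r}_k])$, and since $\mat{N}\inR{\tilde{r}_{k-1}n_k\times\tilde{r}_k}$ the reshape convention makes $\spl_2(\core{G}{k})=\mat{N}$. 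Applying \cref{lem:qp} to $\mat{N}$ yields that $\spl_2(\core{G}{k})$ is orthonormal. The symmetric case $k>p$ is handled verbatim by reading Step 15, where $\spl_1^T(\core{G}{k})$ plays the role of $\spl_2(\core{G}{k})$ and $\core{G}{k}=\reshape(\mat{N}^T,[\tilde{r}_{k-1},n_k,\tilde{r}_k])$ gives $\spl_1^T(\core{G}{k})=\mat{N}$.

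What remains is to check persistence, and this is where I expect the only subtlety to lie. After the $k$-th iteration of the first loop sets $\core{G}{k}$, the subsequent transfer step modifies only $\core{G}{k+1}$ via $\times_1\mat{T}^T$, so $\core{G}{k}$ is never touched again within that loop; moreover the second loop ranges over $k=d,\ldots,p+1$ and alters only cores of index $\ge p$, hence never disturbs a core of index $<p$. Therefore the orthonormal unfolding $\spl_2(\core{G}{k})$ survives unchanged to termination, and the same reasoning shows the first loop never overwrites cores of index $>p$. The main obstacle is thus not any estimate but precisely this tracking of the update pattern: one must be careful that the reshape/transpose conventions make the relevant unfolding equal to $\mat{N}$, and that no later $\times_1$ or $\times_3$ contraction overwrites a core whose orthonormality has just been certified.
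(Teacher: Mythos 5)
Your proposal is correct and follows essentially the same route as the paper, which justifies \cref{cor:lo} in two sentences by combining \cref{thm:dedup} (each \textit{Depar} input is a quasi-permutation matrix), the duplicate-column removal performed in Steps~9 and~15, and \cref{lem:qp}. Your additional bookkeeping --- verifying that the relevant unfolding of each core literally equals the returned $\mat{N}$ under the reshape/transpose conventions, and that no later $\times_1$ or $\times_3$ update overwrites a core of index $<p$ (respectively $>p$) --- is exactly the tacit content the paper leaves unstated, and it checks out.
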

    
    \begin{lemma}\label{lem:lo_product}
        Suppose $\core{U}{i}, i = 1,\ldots,d$ are the cores of a tensor train. If matrix $\spl_2(\core{U}{i})$ is an orthonormal matrix for all $i = 1,\ldots,k$ ($1\le k\le d$), then the matrix $\spl_j(\core{U}{1}\circ\cdots\circ\core{U}{j})$ is an orthonormal matrix for all $j=1,\ldots,k$.
    \end{lemma}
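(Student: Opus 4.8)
The plan is to prove the statement by induction on $j$, showing that each partial contraction, when unfolded in the appropriate way, inherits the orthonormality of the individual cores. For $j = 1,\ldots,k$ I introduce the shorthand $\mat{W}_j \coloneqq \spl_j(\core{U}{1}\circ\cdots\circ\core{U}{j}) \inR{(n_1\cdots n_j)\times r_j}$, where I use $r_0 = 1$ so that the contraction of the first $j$ cores is a tensor of shape $n_1\times\cdots\times n_j\times r_j$ and its $j$-unfolding groups the physical modes $n_1,\ldots,n_j$ as rows and the rank index $r_j$ as columns. The goal is then precisely $\mat{W}_j^T\mat{W}_j = \mat{I}_{r_j}$ for every $j\le k$. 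The base case $j=1$ is immediate: because $r_0 = 1$, the matrix $\spl_1(\core{U}{1})$ coincides with $\spl_2(\core{U}{1})$ as an $n_1\times r_1$ array, and the latter is orthonormal by hypothesis.

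First I would establish the recursion that links consecutive partial unfoldings,
\begin{equation*}
\mat{W}_j = (\mat{W}_{j-1}\otimes\mat{I}_{n_j})\,\spl_2(\core{U}{j}),
\end{equation*}
where $\otimes$ denotes the Kronecker product. This is obtained by writing out the defining contraction
\begin{equation*}
(\core{U}{1}\circ\cdots\circ\core{U}{j})(i_1,\ldots,i_j,\beta) = \sum_{\alpha}(\core{U}{1}\circ\cdots\circ\core{U}{j-1})(i_1,\ldots,i_{j-1},\alpha)\,\core{U}{j}(\alpha,i_j,\beta),
\end{equation*}
and checking that, under the index ordering fixed by the paper's vectorization convention (first index varies slowest), the summation over the shared rank index $\alpha$ is exactly the matrix product on the right-hand side. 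The factor $\mat{I}_{n_j}$ accounts for the newly appended physical mode $i_j$, which is merely passed through and which couples the row block of $\mat{W}_{j-1}\otimes\mat{I}_{n_j}$ to the matching row of $\spl_2(\core{U}{j})$ via the Kronecker delta.

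Granting the recursion, the inductive step is short. Assuming $\mat{W}_{j-1}^T\mat{W}_{j-1} = \mat{I}_{r_{j-1}}$, the mixed-product property of the Kronecker product gives $(\mat{W}_{j-1}\otimes\mat{I}_{n_j})^T(\mat{W}_{j-1}\otimes\mat{I}_{n_j}) = (\mat{W}_{j-1}^T\mat{W}_{j-1})\otimes\mat{I}_{n_j} = \mat{I}_{r_{j-1}n_j}$, whence
\begin{equation*}
\mat{W}_j^T\mat{W}_j = \spl_2(\core{U}{j})^T(\mat{W}_{j-1}\otimes\mat{I}_{n_j})^T(\mat{W}_{j-1}\otimes\mat{I}_{n_j})\spl_2(\core{U}{j}) = \spl_2(\core{U}{j})^T\spl_2(\core{U}{j}) = \mat{I}_{r_j},
\end{equation*}
where the final equality uses the hypothesis that $\spl_2(\core{U}{j})$ is orthonormal for every $j\le k$. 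This closes the induction and yields the claim for all $j=1,\ldots,k$.

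I expect the main obstacle to be the index bookkeeping in the recursion, namely verifying that once both operands are reshaped according to the stated unfolding conventions the contraction over the auxiliary rank index lines up exactly as the Kronecker-structured product $(\mat{W}_{j-1}\otimes\mat{I}_{n_j})\,\spl_2(\core{U}{j})$, with no stray permutation of the combined multi-index. Once that identity is in hand, the orthonormality propagation is a direct consequence of the Kronecker mixed-product property, and the remaining algebra is routine.
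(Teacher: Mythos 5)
Your proof is correct: under the paper's row-major vectorization convention the recursion $\mat{W}_j=(\mat{W}_{j-1}\otimes\mat{I}_{n_j})\,\spl_2(\core{U}{j})$ holds exactly (the contraction over the rank index $\alpha$ lines up with no index permutation), and the Kronecker mixed-product step then propagates orthonormality through the induction. The paper itself does not prove this lemma but defers to an external reference, and your induction is essentially the standard argument given there, so there is nothing to fix.
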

    
    The proof of \cref{lem:lo_product} can be found in \cite[Appendix B]{wang2018tensor}. We can now derive the following theorem.
    
    \begin{theorem}\label{thm:tt_rounding}
        (Correctness of \cref{alg:tt_rounding}) The approximation $\ten{B}$ obtained in \cref{alg:tt_rounding} always satisfies $\|\ten{A}-\ten{B}\|_F\le\varepsilon\|\ten{A}\|_F$.
    \end{theorem}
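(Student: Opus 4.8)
The plan is to track the tensor through the three phases of \cref{alg:tt_rounding} --- the first SVD-sweep, the (lossless) QR-sweep, and the second SVD-sweep --- and to control the perturbation introduced by each phase separately, combining them at the end with the triangle inequality. Writing $\ten{A}$ for the input tensor (the exact TT produced by \cref{alg:dedup_rounding}), $\ten{A}_1$ for the tensor after the first SVD-sweep, and $\ten{B}$ for the final output, the goal reduces to the two bounds $\|\ten{A}-\ten{A}_1\|_F\le\sqrt{d-p}\,\delta$ and $\|\ten{A}_1-\ten{B}\|_F\le\sqrt{p-1}\,\delta$, where $\delta$ is the common truncation parameter of \cref{equ:static}. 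Each of these is an instance of the standard TT-truncation error estimate (the quadratic accumulation of singular-value tails used in the analysis of TT-SVD), and the whole argument rests on verifying that the orthonormal ``frame'' required by that estimate is present at every truncation step.

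For the first SVD-sweep I would argue as follows. By \cref{cor:lo} the cores $\core{G}{k}$ with $k<p$ already have orthonormal unfoldings $\spl_2(\core{G}{k})$, and the sweep (running $k=p,\dots,d-1$) never modifies them, so they persist as a left-orthonormal prefix. Each truncated SVD replaces $\spl_2(\core{G}{k})$ by the factor $\mat{U}$, which has orthonormal columns, so after step $k$ the core $k$ is left-orthonormal as well; hence at the moment core $k$ is truncated, all cores $1,\dots,k-1$ are left-orthonormal and by \cref{lem:lo_product} their contraction has an orthonormal unfolding. This orthonormal prefix acts as an isometry, so the local truncation error $\|\mat{E}_k\|_F\le\delta$ equals the corresponding perturbation of the whole tensor, and the $d-p$ truncation errors add in quadrature, giving $\|\ten{A}-\ten{A}_1\|_F\le\sqrt{d-p}\,\delta$.

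The QR-sweep performs only exact economic QR factorizations, so it leaves the represented tensor unchanged while re-orthonormalizing cores $p+1,\dots,d$ from the right; consequently $\spl_1^T(\core{G}{k})$ is orthonormal for all $k>p$ when the second SVD-sweep begins, and cores $1,\dots,p-1$ are still untouched. The second SVD-sweep ($k=p,\dots,2$) is then the mirror image of the first: each truncation stores the orthonormal factor $\mat{U}$ into $\spl_1^T(\core{G}{k})$, so the processed cores become right-orthonormal, and the right-orthonormal suffix (cores $k+1,\dots,d$, guaranteed by the QR-sweep and by the right-handed analogue of \cref{lem:lo_product}) again serves as an isometry. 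The $p-1$ truncations therefore accumulate to $\|\ten{A}_1-\ten{B}\|_F\le\sqrt{p-1}\,\delta$.

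Finally I would combine the two estimates by the triangle inequality, $\|\ten{A}-\ten{B}\|_F\le\|\ten{A}-\ten{A}_1\|_F+\|\ten{A}_1-\ten{B}\|_F\le(\sqrt{d-p}+\sqrt{p-1})\,\delta$, and substitute $\delta=\frac{\varepsilon}{\sqrt{p-1}+\sqrt{d-p}}\|\ten{A}\|_F$ from \cref{equ:static} to obtain exactly $\varepsilon\|\ten{A}\|_F$; this choice of $\delta$ is precisely what makes the two sweep errors balance. The main obstacle, and the part deserving the most care, is the isometry reduction inside each sweep: one must show rigorously that the orthonormal frame (left prefix in the first sweep, right suffix in the second) converts each \emph{local} truncation error on $\spl_2(\core{G}{k})$ or $\spl_1^T(\core{G}{k})$ into an equal-magnitude \emph{global} perturbation of the tensor, and that these perturbations are mutually orthogonal so that within a sweep they add in quadrature rather than linearly. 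Everything else --- the losslessness of the QR-sweep and the final arithmetic --- is routine once this invariant is established.
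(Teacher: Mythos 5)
Your proposal is correct and follows essentially the same route as the paper's proof: the same triangle-inequality split at the lossless QR-sweep, the same use of \cref{cor:lo} and \cref{lem:lo_product} to turn the untouched prefix (resp.\ re-orthonormalized suffix) into an isometry, and the same quadrature accumulation of truncation errors balanced by the choice of $\delta_k$ in \cref{equ:static}. The orthogonality step you flag as the main obstacle is exactly what the paper makes rigorous via the relation $\core{C}{p}_1\circ_3^3\ten{E}_p=\mat{0}$ and an inductive Pythagorean expansion, so your outline fills in precisely as the published argument does.
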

    
    \begin{proof}
        For simplicity, we let
        \begin{itemize}
            \item $\ten{C}$ denote the TT-format tensor after first SVD-sweep,
            \item $\ten{D}$ denote the TT-format tensor before second SVD-sweep,
            \item $\ten{C}_i$ denote the TT-format tensor after the $k=i$ iteration in first SVD-sweep,
            \item $\core{A}{i\dots j}$ denote the contraction of $i$-th to $j$-th core $\core{G}{i}\circ\cdots\circ\core{G}{j}$ of a TT-format tensor $\ten{A}$.
        \end{itemize}
        
        It is obvious that $\ten{C}=\ten{D}$, hence
        \begin{equation*}
            \|\ten{A}-\ten{B}\|_F\le\|\ten{A}-\ten{C}\|_F+\|\ten{D}-\ten{B}\|_F.
        \end{equation*}
        
        Based on the observation that $\core{A}{1\dots p-1}=\core{C}{1\dots p-1}$, we let $\ten{A}=\core{A}{1\dots p-1}\circ\core{A}{p\dots d}$ and $\ten{C}=\core{A}{1\dots p-1}\circ\core{C}{p\dots d}$. According to \cref{cor:lo,lem:lo_product}, $\spl_{p-1}(\core{A}{1\dots p-1})$ is an orthonormal matrix. Thus
        \begin{equation*}
            \|\ten{A}-\ten{C}\|_F
            =\|\core{A}{p\dots d}-\core{C}{p\dots d}\|_F.
        \end{equation*}
        
        Let us concentrate on the first iteration ($k=p$) of first SVD-sweep. The truncated-SVD can be rewritten as $\core{A}{p}=\core{C}{p}_1\times_3\Sigma\mat{V}^T+\ten{E}_p$, where $\|\ten{E}_p\|_F\le\delta_p$ and $\core{C}{p}_1\circ_3^3\ten{E}_p=\mat{0}$. From \cref{line:merge} we know $\core{C}{p+1\dots d}_1=\core{A}{p+1\dots d}\times_1\mat{V\Sigma}$. Thus,
        \begin{align*}
            \|\core{A}{p\dots d}-\core{C}{p\dots d}\|_F^2
            &=\|\core{A}{p}\circ\core{A}{p+1\dots d}-\core{C}{p}_1\circ\core{C}{p+1\dots d}\|_F^2 \\
            &=\|(\core{C}{p}_1\times_3\Sigma\mat{V}^T+\ten{E}_p)\circ\core{A}{p+1\dots d}-\core{C}{p}_1\circ\core{C}{p+1\dots d}\|_F^2 \\
            &=\|\core{C}{p}_1\circ\core{C}{p+1\dots d}_1+\ten{E}_p\circ\core{A}{p+1\dots d}-\core{C}{p}_1\circ\core{C}{p+1\dots d}\|_F^2 \\
            &=\|\ten{E}_p\circ\core{A}{p+1\dots d}\|_F^2+\|\core{C}{p}_1\circ(\core{C}{p+1\dots d}_1-\core{C}{p+1\dots d})\|_F^2 \\
            &=\delta_p^2+\|\core{C}{p+1\dots d}_1-\core{C}{p+1\dots d}\|_F^2
        \end{align*}
        
        Proceeding by induction, we have
        \begin{equation*}
            \|\ten{A}-\ten{C}\|_F^2=\sum_{k=p}^{d-1}\delta_k^2.
        \end{equation*}
        
        Similarly we have
        \begin{equation*}
            \|\ten{D}-\ten{B}\|_F^2=\sum_{k=2}^{p}\delta_{k-1}^2.
        \end{equation*}
        
        According to \cref{equ:static},
        \begin{align}\label{equ:error}
            \|\ten{A}-\ten{B}\|_F\le\sqrt{\sum_{k=p}^{d-1}\delta_k^2}+\sqrt{\sum_{k=2}^{p}\delta_{k-1}^2}\le\varepsilon\|\ten{A}\|_F.
        \end{align}
    \end{proof}
    
    Now, we are ready to describe the whole algorithm for the conversion of a sparse tensor into a TT, presented as \cref{alg:fastTT}.
    \begin{algorithm}[htb]
        \caption{Tensor train decomposition of sparse tensor (FastTT)}
        \label{alg:fastTT}
        \begin{algorithmic}[1]
            \setcounter{ALC@unique}{0}
            \REQUIRE A sparse tensor $\ten{A}\inR{n_1\times n_2\times\ldots\times n_d}$, desired accuracy tolerance $\varepsilon$, an integer $p$ ($1\le p \le d$).
            \ENSURE Cores $\core{G}{1},\ldots,\core{G}{d}$ of the TT-approximation $\ten{B}$ to $\ten{A}$ in the TT-format with TT-ranks $r_k$. The computed approximation satisfies
            \begin{equation*}
                ||\ten{A}-\ten{B}||_F\le\varepsilon||\ten{A}||_F.
            \end{equation*}
            \STATE Obtain $\ten{B}$ in the TT-format with cores $\core{G}{1},\ldots,\core{G}{d}$ by \cref{alg:dedup_rounding}, where \emph{Depar} is implemented as \cref{alg:depar}.
            \STATE Reduce the TT ranks of $\ten{B}$ with \cref{alg:tt_rounding}.
            \STATE Return the TT-approximation $\ten{B}$.
        \end{algorithmic}
    \end{algorithm}
    
    It should be pointed out that if the accuracy tolerance $\varepsilon$ is set to 0\footnote{In practice, $\varepsilon$ is usually set to a small value like $10^{-14}$ due to the inevitable round-off error.}, the obtained TT ranks with \cref{alg:fastTT} will be maximal and equal to the TT-ranks obtained from the TT-SVD algorithm.
    We take $p=1$ as an example to discuss the TT-rank $r_1$ case. In the TT-SVD algorithm, $r_1$ is obtained by computing the SVD of $\mat{C}=\spl_1(\ten{A})$. $\ten{A}$ can also represented as the contraction of the TT cores obtained by \cref{alg:dedup_rounding}.
    \begin{equation*}
        \ten{A}=\core{G}{1}\circ\core{G}{2}\circ\cdots\circ\core{G}{d}.
    \end{equation*}
    Thus,
    \begin{equation*}
        \mat{C}=\spl_2(\core{G}{1})\spl_1(\core{G}{2}\circ\cdots\circ\core{G}{d}).
    \end{equation*}
    According to \cref{cor:lo,lem:lo_product}, $\mat{L}=\spl_1^T(\core{G}{2}\cdots\core{G}{d})$ is an orthonormal matrix, i.e. $\mat{L}^T\mat{L}=\mat{I}$. 
    \begin{equation*}
        \mat{C}\mat{C}^T=\spl_2(\core{G}{1})\mat{L}^T\mat{L}\spl_2^T(\core{G}{1})=\spl_2(\core{G}{1})\spl_2^T(\core{G}{1}).
    \end{equation*}
    This means matrix $\spl_2(\core{G}{1})$ has the same singular values as $\mat{C}$. For \cref{alg:TTSVD},  $r_1$ equals $\mathrm{rank}_{\delta}(\mat{C})$, while  the $r_1$ obtained with \cref{alg:fastTT} is $\mathrm{rank}_{\delta_1}(\spl_2(\core{G}{1}))$ (see \cref{line:3,line:4} of \cref{alg:tt_rounding}). These numerical ranks are therefore equal when $\delta = \delta_1$. Similar results for the other TT ranks and for $p\ne 1$ can be derived. 
    
    For a sparse tensor the runtime of \cref{alg:fastTT} may be smaller than the TT-SVD algorithm, as the SVD is performed on smaller matrices.
    
    \subsection{Fixed-rank TT approximations and matrices in TT-format}
    
    Sometimes we need a TT approximation of a tensor with given TT-ranks.
    We can slightly modify \cref{alg:tt_rounding} to fit this scenario. Specifically,
    the desired accuracy tolerance $\varepsilon$ is not needed and thus substituted with the desired TT-ranks. The truncation parameters $\delta_i$ will not be computed either. In the truncated SVD computation we simply truncate the matrices with the given ranks instead of truncating them according to the accuracy tolerance. This technique could be useful in applications like tensor completion \cite{ko2018fast}.
    
    Some other applications require matrix-vector multiplications, which are convenient if both the matrix and the vector are in TT-format (as shown in \cref{fig:mpo}). A vector $\mat{v}\inR{N}$ can be transformed into TT-format if we first reshape it into a tensor $\ten{V}\inR{n_1\times\cdots\times n_d}$, where $N=n_1\cdots n_d$, and then decompose it into a TT. A ``matrix in TT-format''\cite[pp. 2311-2313]{oseledets2011tensor}, also known as a matrix product operator (MPO), is similar but more complicated. The elements of matrix $\mat{M}\inR{M\times N}$ are rearranged into a tensor $\ten{M}\inR{m_1\times n_1\times\cdots\times m_d\times n_d}$, where $M=m_1\cdots m_d, N=n_1\cdots n_d$. The cores $\core{M}{i}(i=1,\ldots,d)$ of the MPO satisfy
    \begin{equation*}
        \ten{M}(i_1,j_1,\ldots,i_d,j_d)=\core{M}{1}(:,i_1,j_1,:)\cdots\core{M}{d}(:,i_d,j_d,:),
    \end{equation*}
    where $\core{M}{i}\inR{r_{i-1}\times m_i\times n_i\times r_i}(i=1,\ldots,d),r_0=r_d=1$. The matrix-to-MPO algorithm is basically computing a TT-decomposition of the $d$-way tensor $\ten{M}'\inR{m_1n_1\times\cdots\times m_dn_d}$, along with a few necessary reshapings, which can also be done with \cref{alg:fastTT}.
    
    \begin{figure}[!htbp]
        \centering
        \resizebox{160px}{!}{
            \begin{tikzpicture}[text height=1.5ex, text depth=0.25ex, auto, node distance=1.5cm, ampersand replacement=\&]
                \tikzstyle{tensor} = [circle, thick, inner sep=0pt, minimum size=1cm, draw=black]
                \tikzstyle{null} = [circle, minimum size=0cm]
                \matrix[column sep=0.75cm, row sep=0.75cm] {
                    \node[tensor] (m1) {$\core{M}{1}$}; \&
                    \node[tensor] (m2) {$\core{M}{2}$}; \&
                    \node[tensor] (m3) {$\core{M}{3}$}; \&
                    \\
                    \node[tensor] (v1) {$\core{V}{1}$}; \&
                    \node[tensor] (v2) {$\core{V}{2}$}; \&
                    \node[tensor] (v3) {$\core{V}{3}$}; \&
                    \\
                };
                \node[null] (m0) [left of=m1] {};
                \node[null] (n1) [above of=m1] {};
                \node[null] (n2) [above of=m2] {};
                \node[null] (n3) [above of=m3] {};
                \node[null] (m4) [right of=m3] {};
                \node[null] (v0) [left of=v1] {};
                \node[null] (v4) [right of=v3] {};
                \path[every node/.style={above}]
                (m0) edge[dashed] node{$r_0$} (m1)
                (m1) edge node{$r_1$} (m2)
                (m2) edge node{$r_2$} (m3)
                (m3) edge[dashed] node{$r_3$} (m4)
                (v0) edge[dashed] node{$r'_0$} (v1)
                (v1) edge node{$r'_1$} (v2)
                (v2) edge node{$r'_2$} (v3)
                (v3) edge[dashed] node{$r'_3$} (v4)
                ;
                \path[every node/.style={left}]
                (m1) edge node{$m_1$} (n1)
                (m2) edge node{$m_2$} (n2)
                (m3) edge node{$m_3$} (n3)
                (m1) edge node{$n_1$} (v1)
                (m2) edge node{$n_2$} (v2)
                (m3) edge node{$n_3$} (v3)
                ;
            \end{tikzpicture}
        }
        \caption{Diagram of matrix-vector-multiplication in the TT-format.}
        \label{fig:mpo}
    \end{figure}
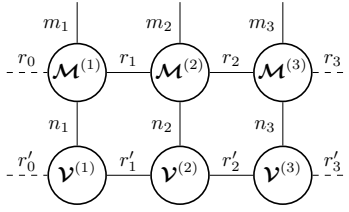
    
    \subsection{A dynamic method to choose the truncation parameters}\label{sec:dynamic}
    
    The actual relative error of the truncated SVD is usually not very close to the truncation parameter $\delta_k$, which implies that if the truncation parameters are set statically at the beginning with \cref{equ:static}, some of the desired accuracy tolerant $\varepsilon$ will be ``wasted''. The main idea of the dynamic method is to compute the truncation parameters dynamically to make use of those ``wastes''. One of the possible approaches is shown in \cref{alg:dynamic}. In each step of the truncated SVD, an expected error is calculated with the current ``total error remainder'' and used as the truncation parameter. After each truncated SVD, the ``total error remainder'' is decreased according to the actual error.
    Such an adaptive approach to setting the truncation parameters can lead to lower TT ranks while keeping the relative error smaller than $\varepsilon$.
    
    \begin{algorithm}[!htbp]
        \caption{The revised TT-rounding for the sparse TT conversion}
        \label{alg:dynamic}
        \begin{algorithmic}[1]
            \setcounter{ALC@unique}{0}
            \REQUIRE \textit{(same as \cref{alg:tt_rounding})}
            \ENSURE \textit{(same as \cref{alg:tt_rounding})}
            \STATE $\delta_\text{right}\coloneqq\frac{\sqrt{d-p}}{\sqrt{d-p}+\sqrt{p-1}}\varepsilon\|\ten{A}\|_F,~
            \delta_\text{left}\coloneqq\frac{\sqrt{p-1}}{\sqrt{d-p}+\sqrt{p-1}}\varepsilon\|\ten{A}\|_F$.
            \FOR{$k=p,\ldots,d-1$}\label{line:loop1_start}
            \STATE $\delta_k\coloneqq\frac{\delta_\text{right}}{\sqrt{d-k}}$.
            \STATE \hyperref[line:3]{Steps~\ref*{line:3}}-\ref{line:merge} of \cref{alg:tt_rounding}.
            \STATE $\delta_\text{right}\coloneqq\sqrt{\delta_\text{right}^2-\|\mat{E}_k\|_F^2}$.
            \ENDFOR\label{line:loop1_end}
            \STATE \hyperref[line:qrb]{Steps~\ref*{line:qrb}}-\ref{line:qre} of \cref{alg:tt_rounding}.
            \FOR{$k=p,\ldots,2$}\label{line:loop2_start}
            \STATE $\delta_{k-1}\coloneqq\frac{\delta_\text{left}}{\sqrt{k-1}}$.
            \STATE \hyperref[line:15]{Steps~\ref*{line:15}}-\ref{line:merge2} of \cref{alg:tt_rounding}.
            \STATE $\delta_\text{left}\coloneqq\sqrt{\delta_\text{left}^2-\|\mat{E}_{k-1}\|_F^2}$.
            \ENDFOR\label{line:loop2_end}
            \STATE Return $\core{G}{1},\ldots,\core{G}{d}$ as cores of $\ten{B}$.
        \end{algorithmic}
    \end{algorithm}
    
    \begin{theorem}\label{thm:dyn}
        (Correctness of \cref{alg:dynamic}) The approximation $\ten{B}$ obtained in \cref{alg:dynamic} always satisfies $\|\ten{A}-\ten{B}\|_F\le\varepsilon\|\ten{A}\|_F$.
    \end{theorem}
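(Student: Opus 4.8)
The plan is to piggyback entirely on the error decomposition already established in the proof of \cref{thm:tt_rounding}. \Cref{alg:dynamic} executes exactly the same operations as \cref{alg:tt_rounding}---the same first SVD-sweep, QR-sweep, and second SVD-sweep---and differs \emph{only} in the numerical values assigned to the truncation parameters $\delta_k$. The structural facts used in \cref{thm:tt_rounding}, namely $\ten{C}=\ten{D}$, the triangle inequality $\|\ten{A}-\ten{B}\|_F\le\|\ten{A}-\ten{C}\|_F+\|\ten{D}-\ten{B}\|_F$, and the two Pythagorean identities
\[
\|\ten{A}-\ten{C}\|_F^2=\sum_{k=p}^{d-1}\|\mat{E}_k\|_F^2, \qquad \|\ten{D}-\ten{B}\|_F^2=\sum_{k=2}^{p}\|\mat{E}_{k-1}\|_F^2,
\]
follow solely from the orthonormality guaranteed by \cref{cor:lo,lem:lo_product} together with the orthogonality of each truncated-SVD residual to its retained part. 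None of these steps refers to the particular value of $\delta_k$, so they transfer verbatim to \cref{alg:dynamic}. First I would restate this decomposition and emphasize that it is parameter-independent; the static proof only used the specific $\delta_k$ at the very last step (the bound \cref{equ:error}), which is precisely what the dynamic schedule replaces.

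Next I would bound each of the two sums by its initial budget via a telescoping invariant. Focusing on the right sweep, let $\delta_{\text{right}}^{(0)}$ be the initial value from Step~1 and $\delta_{\text{right}}^{(j)}$ its value after $j$ updates. The update rule in \cref{alg:dynamic} gives $\bigl(\delta_{\text{right}}^{(j+1)}\bigr)^2 = \bigl(\delta_{\text{right}}^{(j)}\bigr)^2 - \|\mat{E}_k\|_F^2$ with $k=p+j$, which telescopes to
\[
\sum_{k=p}^{d-1}\|\mat{E}_k\|_F^2 = \bigl(\delta_{\text{right}}^{(0)}\bigr)^2 - \bigl(\delta_{\text{right}}^{(d-p)}\bigr)^2 \le \bigl(\delta_{\text{right}}^{(0)}\bigr)^2.
\]
To make this rigorous I would prove by induction that every $\delta_{\text{right}}^{(j)}$ is real and nonnegative: at iteration $k\le d-1$ the parameter is $\delta_k=\delta_{\text{right}}^{(k-p)}/\sqrt{d-k}$, so $\|\mat{E}_k\|_F\le\delta_k\le\delta_{\text{right}}^{(k-p)}$ because $d-k\ge 1$, which keeps the quantity under the square root $\ge 0$. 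The identical argument, using $\delta_{k-1}=\delta_{\text{left}}/\sqrt{k-1}$ with $k-1\ge 1$, yields $\sum_{k=2}^{p}\|\mat{E}_{k-1}\|_F^2 \le \bigl(\delta_{\text{left}}^{(0)}\bigr)^2$ for the left sweep.

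Finally I would combine the pieces. Substituting the two bounds into the triangle inequality gives $\|\ten{A}-\ten{B}\|_F\le\delta_{\text{right}}^{(0)}+\delta_{\text{left}}^{(0)}$, and since the two prefactors sum to one,
\[
\delta_{\text{right}}^{(0)}+\delta_{\text{left}}^{(0)} = \frac{\sqrt{d-p}+\sqrt{p-1}}{\sqrt{d-p}+\sqrt{p-1}}\,\varepsilon\|\ten{A}\|_F = \varepsilon\|\ten{A}\|_F,
\]
which is the claimed bound. The main obstacle is not the telescoping, which is immediate once set up, but rather the two justifications that precede it: verifying that the error decomposition of \cref{thm:tt_rounding} is genuinely independent of the truncation schedule, and checking that the dynamically updated budgets $\delta_{\text{right}},\delta_{\text{left}}$ never become negative. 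Once these are secured, the correctness is essentially a bookkeeping identity.
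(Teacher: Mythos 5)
Your proposal is correct and takes essentially the same route as the paper: the paper likewise reuses the parameter-independent error bound $\|\ten{A}-\ten{B}\|_F\le\sqrt{\sum_{k=p}^{d-1}\|\mat{E}_k\|_F^2}+\sqrt{\sum_{k=2}^{p}\|\mat{E}_{k-1}\|_F^2}$ from the proof of \cref{thm:tt_rounding}, establishes your telescoping identity in the form of a loop invariant ($\delta_\text{right}^2+\sum_{i=p}^{k-1}\|\mat{E}_i\|_F^2$ constant through \hyperref[line:loop1_start]{Loop~\ref*{line:loop1_start}}--\ref{line:loop1_end}, and symmetrically for the left sweep), and concludes exactly as you do from $\delta_\text{right}^{(0)}+\delta_\text{left}^{(0)}=\varepsilon\|\ten{A}\|_F$. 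Your explicit induction that $\delta_\text{right}$ and $\delta_\text{left}$ stay nonnegative (using $\|\mat{E}_k\|_F\le\delta_k\le\delta_\text{right}$ since $\sqrt{d-k}\ge1$), so the square-root updates are well-defined, is a detail the paper leaves implicit rather than a different argument.
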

    
    The proof of \cref{thm:dyn} is provided in \cref{proof:dyn}.
    
    \subsection{Complexity Analysis}
    
    Finding nonzero $p$-fibers can be accelerated by employing balanced binary search trees or hash tables, while parallel-vector rounding will be efficient if \emph{Depar} is implemented as \cref{alg:dedup}. Notice that, there is no floating point operation in these procedures. Therefore, the time complexity of \cref{alg:fastTT} mainly depends on \cref{alg:tt_rounding}, where the cost of SVD is of major concern. The FLOP count $f_{\mathrm{fasttt}}$ can thus be estimated as
    \begin{align}\label{equ:flop}
        f_{\mathrm{fasttt}}\approx f_{\mathrm{SVD}}(\tilde{r}_{p-1}n_p, \tilde{r}_p)
        +\sum_{i=p+1}^{d-1}f_{\mathrm{SVD}}(r_{i-1}n_i,\tilde{r}_i)
        +\sum_{i=2}^{p}f_{\mathrm{SVD}}(\tilde{r}_{i-1}, n_ir_i),
    \end{align}
    where $\{\tilde{r}_k\}$ and $\{{r}_k\}$ are the TT-ranks before and after executing \cref{alg:tt_rounding}. According to \cref{equ:upper}, where the upper bound of $\tilde{r}_k$, i.e., $\bar{r}_k$, is given, we can estimate the upper bound of the FLOP count before any actual computation. With this estimation, $p$ can be automatically selected as described in \cref{alg:select_p}. In \cref{line:tilde_r}, $\{\tilde{r}_k\}$ can be obtained alternatively by actually executing \textit{Depar} for a more precise estimation since it will not take much time after all.
    
    \begin{algorithm}[!htbp]
        \caption{Automatically select $p$}
        \label{alg:select_p}
        \begin{algorithmic}[1]
            \setcounter{ALC@unique}{0}
            \REQUIRE A sparse tensor $\ten{A}\inR{n_1\times n_2\times\ldots\times n_d}$.
            \ENSURE Selected $\bar{p}$ for best estimated performance in \cref{alg:fastTT}.
            \FOR{$p=1,\ldots,d$}
            \STATE $R\coloneqq$ the number of nonzero $p$-subvectors of $\ten{A}$.
            \STATE $\{\tilde{r}_k\}\coloneqq\{\bar{r}_k\}$ given in \cref{equ:upper}.\label{line:tilde_r}
            \STATE $\{r_k\}\coloneqq\{\tilde{r}_k\}$, or specified by users.
            \STATE $f_p\coloneqq$ the estimated FLOP count in \cref{equ:flop}.
            \ENDFOR
            \STATE Return $\bar{p}=\argmin{p}f_p$.
        \end{algorithmic}
    \end{algorithm}
    
    For a more intuitive view of the time complexity, we analyze the FLOP counts for an example from \hyperref[sec:image]{Section~\ref*{sec:image}}. Suppose we are computing a fixed rank-10 TT-approximation of a sparse $7$-way tensor $\ten{A}\inR{10\times20\times20\times10\times15\times20\times3}$ with density \mbox{$\sigma=0.001$}. According to \cref{equ:ttsvd}, the approximate FLOP count of TT-SVD is
    \begin{align*}
        f_{\text{TTSVD}}\approx& f_{\svd}(10,20\times20\times10\times15\times20\times3)+f_{\svd}(20r,20\times10\times15\times20\times3)\\
        &+f_{\svd}(20r,10\times15\times20\times3)+\cdots+f_{\svd}(20r,3)\\
        \approx&(3.6\times10^8+7.6r^2\times10^7)C_{\svd}\\
        \approx&(8\times10^9)C_{\svd}.
    \end{align*}
    As for $f_\text{fasttt}$, we let $p=7$. Since the elements of $\ten{A}$ are grouped in triples stored in the last dimension, the number of nonzero $7$-subvectors $R$ satisfies $R\le\operatorname{nnz}(\ten{A})/3=12000$, which means $\{\bar{r}_k\}$ given in \cref{equ:upper} is no more than $\{10,200,4000,12000,12000,12000\}$. According to \cref{equ:flop}, we have
    \begin{align*}
        f_{\text{fasttt}}\approx& f_{\svd}(3,12000)+f_{\svd}(20r,12000)+f_{\svd}(15r,12000)\\
        &+f_{\svd}(10r,4000)+f_{\svd}(20r,200)\\
        \approx&(1.08\times10^5+8r^2\times10^6)C_{\svd}\\
        \approx&(8\times10^8)C_{\svd}.
    \end{align*}
    
    In this case, \cref{alg:fastTT} is about 10X faster than TT-SVD. The actual speedup will be a bit lower due to the uncounted operations such as those in \cref{alg:dedup_rounding}. If we increase the density $\sigma$ to 0.01, $f_{\text{TTSVD}}$ will remain the same and $f_{\text{fasttt}}$ will change into
    \begin{align*}
        f_{\text{fasttt}}\approx& f_{\svd}(3,120000)+f_{\svd}(20r,120000)+f_{\svd}(15r,40000)\\
        &+f_{\svd}(10r,4000)+f_{\svd}(20r,200)\\
        \approx&(1.08\times10^6+5.7r^2\times10^7)C_{\svd}\\
        \approx&(5.7\times10^9)C_{\svd},
    \end{align*}
    and the speedup drops to 1.4. The actual speedup will be a bit higher because we overestimate $\{\tilde{r}_k\}$ and the uncounted operations become insignificant with the increasing SVD cost. For matrix-to-MPO applications, the main operation is the TT-decomposition of $\ten{M}'\inR{m_1n_1\times\cdots\times m_dn_d}$, hence \cref{equ:flop} also works here. We just need to replace $n_i$ with $m_in_i$.

    We now look at the memory cost of the FastTT algorithm. Before the TT-rounding step, all data are stored in a sparse format. Therefore, the extra memory cost occurs in the TT-rounding which is similar to the TT-SVD algorithm and is of similar size to the cores in the obtained TT.
    
    \section{Numerical Experiments}
    
    In order to provide empirical proof of the performance of the developed FastTT algorithm, we conduct several numerical experiments. Since \cref{alg:dedup_rounding} is not friendly to MATLAB, FastTT is implemented in C++ based on the \textit{xerus} C++ toolbox \cite{xerus} and with Intel Math Kernel Library\footnote{\url{https://software.intel.com/en-us/mkl}}. The \textit{xerus} library also contains implementations of TT-SVD and randomized TT-SVD (rTTSVD) \cite{huber2017randomized}. As no C++ implementation of the TT-cross method~\cite{oseledets2010tt} is available, we use TT-cross from \textit{TT-toolbox}\footnote{\url{https://github.com/oseledets/TT-Toolbox}} in MATLAB. All experiments are carried out on a x86-64 Linux server with 32 CPU cores and 512G RAM. The desired accuracy tolerance $\varepsilon$ of TT-SVD, TT-cross and our FastTT algorithms is $10^{-14}$, unless otherwise stated. The oversampling parameter of rTTSVD algorithm is set to 10. In all experiments, the CPU time is reported.
    
    \subsection{Image/video inpainting}\label{sec:image}
    
    Applications like tensor completion \cite{ko2018fast} require a fixed-rank TT-approximate of the given tensors. The tensors used in this section are a large color image \textit{Dolphin}\footnote{\url{http://absfreepic.com/absolutely_free_photos/original_photos/dolphin-4000x3000_21859.jpg}} which has been reshaped into a $10\times20\times20\times10\times15\times20\times3$ tensor and a color video \textit{Mariano Rivera Ultimate Career Highlights}\footnote{\url{https://www.youtube.com/watch?v=UPtDJuJMyhc}} which has been reshaped into a $20\times18\times20\times32\times12\times12\times3$ tensor. Most pixels of the image/video are not observed and are regarded as zeros whereas the observed pixels are chosen randomly. The observation ratio $\sigma$ is the ratio of observed pixels to the total number of pixels. \Cref{tab:exp3} shows the results for different specified TT-ranks and observation ratios.
    
    \begin{table}[htbp]
        \centering
        \caption{Experimental results on an image and a video with different observation ratios and preset TT-ranks.}
        \label{tab:exp3}
        \begin{tabular}{@{~}c@{~}c@{~}c@{~}c@{~}c@{~}c@{~}c@{~}c@{~}c@{~}}
            \toprule
            \multirow{2}*{data} & \multirow{2}*{TT-rank} & \multirow{2}*{$\sigma$} & \multicolumn{4}{c}{time (s)} & & \multirow{2}*{speedup} \\
            \cmidrule{4-7}
            & & & TT-SVD & TT-cross & rTTSVD & FastTT & &  \\
            \midrule
            \multirow{6}*{image} & 10 & 0.001 & 20.2 & 20.2 & 24.1 & 3.43 & & 9.6X \\
            & 10 & 0.005 & 32.3 & 22.7 & 23.9 & 10.9 & & 3.0X \\
            & 10 & 0.01 & 32.8 & 23.7 & 26.0 & 14.2 & & 2.3X \\
            & 30 & 0.001 & 42.7 & 68.1 & 38.1 & 12.2 & & 3.5X \\
            & 30 & 0.005 & 42.9 & 70.9 & 33.8 & 20.5 & & 2.1X \\
            & 100 & 0.001 & 67.3 & 366 & 91.5 & 23.7 & & 2.8X \\
            \midrule
            \multirow{6}*{video} & 10 & 0.001 & 66.2 & 24.9 & 56.0 & 10.4 & & 6.4X \\
            & 10 & 0.005 & 66.6 & 30.3 & 60.5 & 26.2 & & 2.5X \\
            & 10 & 0.01 & 66.9 & 31.2 & 62.7 & 33.3 & & 2.0X \\
            & 30 & 0.001 & 103 & 122 & 108 & 26.5 & & 3.9X \\
            & 30 & 0.005 & 110 & 140 & 94.2 & 47.6 & &2.3X \\
            & 100 & 0.001 &232 & 1080 & 221 & 107 & &2.2X \\
            \bottomrule
        \end{tabular}
    \end{table}
    
    It can be seen from \cref{tab:exp3} that our algorithm can greatly speed up the calculation of a TT-approximation when the observation ratio is small. We have also tested the TT-cross algorithm and the rTTSVD algorithm which also speed up the calculation in some cases. However, the speedup of them are not as great as ours, and in cases where the preset TT-rank is high we observe that they are even slower than the TT-SVD algorithm. In addition, the quality of the TT-approximation calculated by the TT-cross/rTTSVD algorithm is not as good as ours. For example, in the image inpainting task where the TT-rank is 100 and the observation ratio $\sigma$ is 0.001, the mean square error (MSE) of both TT-SVD algorithm and our algorithm is 22.3, while the MSE of TT-cross and rTTSVD is 66.0 and 23.5, correspondingly.
    
    For each of the experiments the integer $p$ is selected automatically by the FLOP estimation in \cref{alg:select_p}. Now, we validate this FLOP estimation. For the parameters $\text{TT-rank}=100$, $\sigma=0.001$ in the image experiment we run \cref{alg:fastTT} several times while manually setting different integer $p$ and plot the CPU time for each $p$ along with the estimated FLOP count. The results are shown in \cref{fig:select_p}, where we can see that the trend of the two curves is basically consistent. The integer $p$ selected by \cref{alg:select_p} is $p=7$, with which the exact CPU time is only slightly more than the best selection at $p=6$. Although \Cref{alg:select_p} does not always produce the best $p$, it certainly avoids bad values like $p=3$ in this case.
    
    \begin{figure}[htbp]
        \centering
        \includegraphics[width=250px]{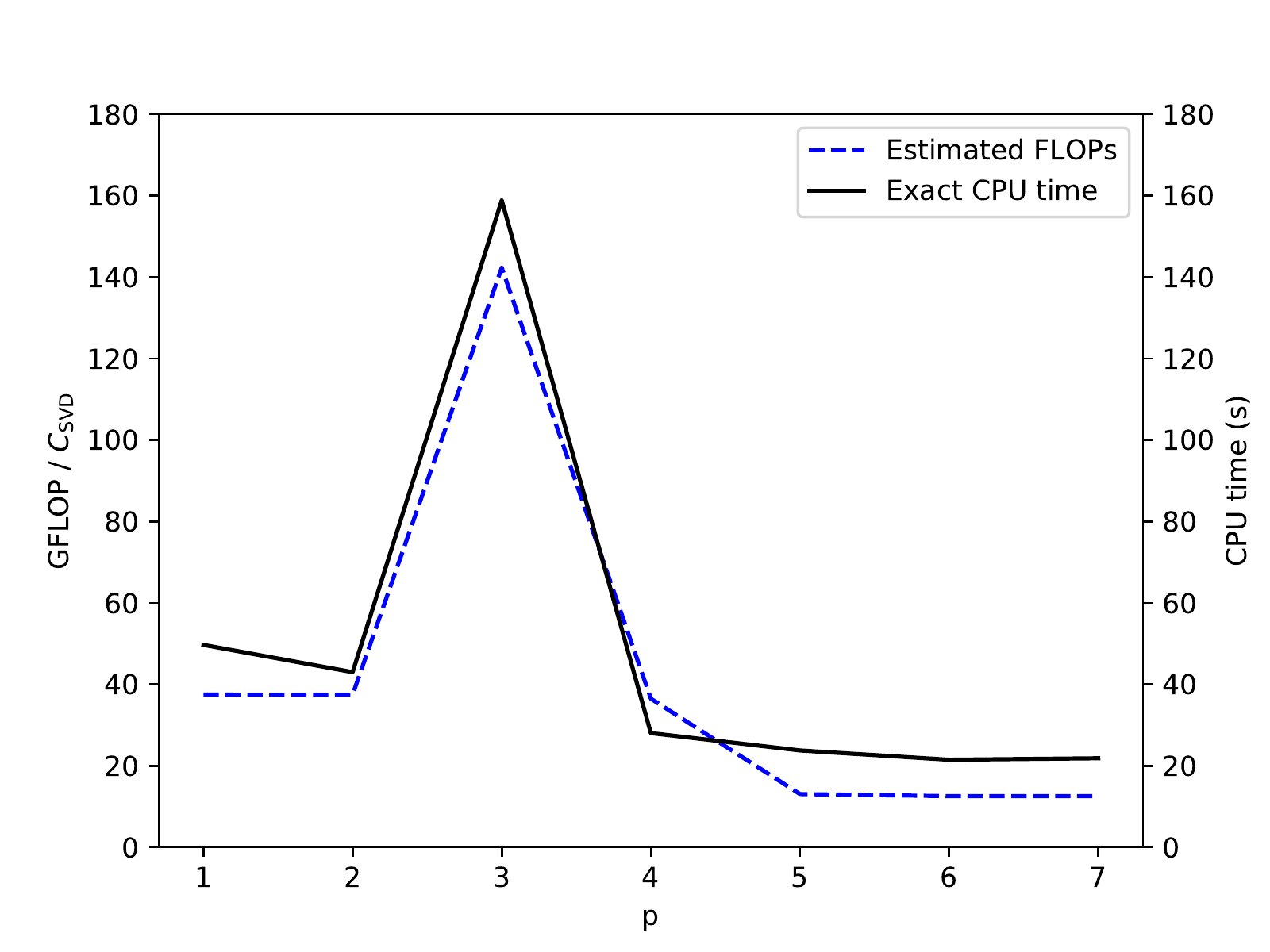}
        \caption{The CPU time and estimated FLOPs with \cref{equ:flop} of the FastTT algorithm for different $p$ values.}
        \label{fig:select_p}
    \end{figure}
    
    \subsection{Linear equation in finite difference method}
    
    The finite difference method (FDM) is widely used for solving partial differential equations, in which finite differences approximate the partial derivatives. With FDM, a linear equation system with sparse coefficient matrix is solved. We consider simulating a three-dimensional rectangular domain with FDM. The resulted linear equation system can be transformed into the matrix TT format (i.e. MPO) and then solved with an alternating least squares (ALS) method~\cite{oseledets2012solution} or AMEn \cite{dolgov2015corrected}.
    
    
    \begin{figure}[htbp]
        \centering
        \includegraphics[width=180px]{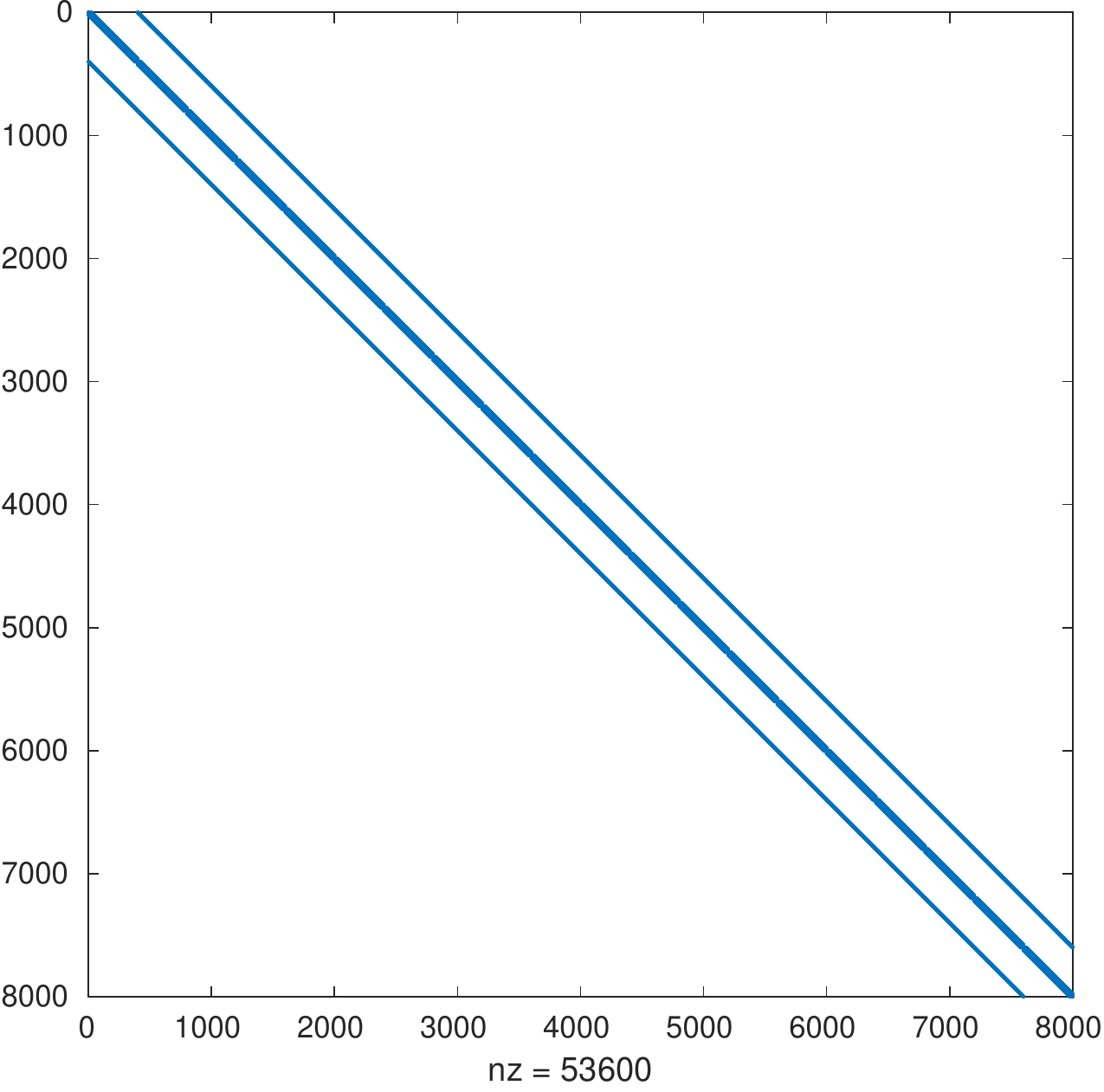}
        \caption{The sparsity pattern of the coefficient matrix for FDM with $20\times20\times20$ grids.}
        \label{fig:spy_fdm}
    \end{figure}
    
    For a domain partitioned into $n\times m\times k$ grids, FDM produces a coefficient matrix $\mat{A}\inR{N\times N}$, where $N=n\times m\times k$.
    For example, the sparsity pattern of the coefficient matrix $\mat{A}$ for FDM with $20\times20\times20$ grids is shown in \cref{fig:spy_fdm}.
    Naturally, the $\mat{A}$ matrix can be regarded as a 6-way tensor $\ten{A} \inR{n\times n\times m\times m\times k\times k}$, which is then converted into an MPO. 
    Since the tensor $\ten{A}$ is very sparse, replacing the TT-SVD with FastTT will speed up the procedure of computing its TT-decomposition. In this experiment we construct the coefficient matrix with different grid partition, while the coefficients either follow a particular pattern, or are randomly generated. The results for converting the matrix to an MPO
    are shown in \cref{tab:exp_fdm}.
    
    \begin{table}[htbp]
        \centering
        \caption{Experimental results on the coefficient matrices for the FDM with $n\times n\times n$ grids.}
        \label{tab:exp_fdm}
        \begin{threeparttable}
            \begin{tabular}{@{~}c@{~}c@{~}c@{~}c@{~}c@{~}c@{~}c@{~}}
                \toprule
                $n$ & coefficients & method & time(s) & speedup & $\varepsilon_{\text{actual}}$\tnote{1} & TT-ranks\tnote{2} \\
                \midrule
                \multirow{4}*{20} & \multirow{4}*{pattern} & TT-SVD & 43.6 & -- & $4.0\!\times\!10^{-16}$ & ${r}:2,2$ \\
                & & TT-cross & 7.96 & 5.5X & $2.0\!\times\!10^{-16}$ & ${r}:2,2$ \\
                & & rTTSVD & 1.29 & 34X & $1.2\!\times\!10^{-15}$ & ${r}:2,2$ \\
                & & FastTT & 0.788 & 55X & $9.6\!\times\!10^{-16}$ & $R\!:\!1920;~\tilde{{r}}\!:\!58,\!58;~{r}\!:\!2,\!2$ \\
                \midrule
                \multirow{4}*{30} & \multirow{4}*{pattern} & TT-SVD & 690 & -- & $2.0\!\times\!10^{-15}$ & ${r}:2,2$ \\
                & & TT-cross & 26.5 & 26X & $8.8\!\times\!10^{-16}$ & ${r}:2,2$ \\
                & & rTTSVD & 19.3 & 36X & $1.6\!\times\!10^{-15}$ & ${r}:2,2$ \\
                & & FastTT & 2.88 & 240X & $1.1\!\times\!10^{-15}$ & $R\!:\!4380;~\tilde{{r}}\!:\!88,\!88;~{r}\!:\!2,\!2$ \\
                \midrule
                \multirow{4}*{20} & \multirow{4}*{random} & TT-SVD & 53.4 & -- & $2.5\!\times\!10^{-15}$ & ${r}:58,58$ \\
                & & TT-cross & 226 & 0.24X & $1.1\!\times\!10^{-15}$ & ${r}:58,58$ \\
                & & rTTSVD & 23.4 & 2.3X & $4.8\!\times\!10^{-15}$ & ${r}:58,58$ \\
                & & FastTT & 1.67 & 32X & $2.4\!\times\!10^{-15}$ & $R\!:\!1920;~\tilde{{r}}\!:\!58,\!58;~{r}\!:\!58,\!58$ \\
                \midrule
                \multirow{4}*{30} & \multirow{4}*{random} & TT-SVD & 762 & -- & $3.4\!\times\!10^{-15}$ & ${r}:88,88$ \\
                & & TT-cross & 2725 & 0.28X & $6.2\!\times\!10^{-15}$ & ${r}:88,88$ \\
                & & rTTSVD & 67.0 & 11X & $4.2\!\times\!10^{-15}$ & ${r}:88,88$ \\
                & & FastTT & 12.4 & 61X & $3.3\!\times\!10^{-15}$ & $R\!:\!4380;~\tilde{{r}}\!:\!88,\!88;~{r}\!:\!88,\!88$ \\
                \midrule
                \multirow{4}*{40} & \multirow{4}*{random} & TT-SVD & NA & -- & NA & NA \\
                & & TT-cross & NA & -- & NA & NA \\
                & & rTTSVD & 597 & -- & $5.0\!\times\!10^{-15}$ & ${r}:118,118$ \\
                & & FastTT & 57.5 & -- & $2.6\!\times\!10^{-15}$ & \small{$R\!:\!7840;\tilde{{r}}\!:\!118,\!118;{r}\!:\!118,\!118$} \\
                \bottomrule
            \end{tabular}
            \begin{tablenotes}
                \footnotesize
                \item[1] $\varepsilon_{\text{actual}}=\frac{\|\ten{A}-\ten{B}\|_F}{\|\ten{A}\|_F}$. The same below.
                \item[2] $R$ is the number of nonzero $p$-fibers. $\tilde{r}$ is the TT-ranks after parallel-vector rounding. $r$ is the final TT-ranks.
            \end{tablenotes}
        \end{threeparttable}
    \end{table}
    
    As seen from \cref{tab:exp_fdm}, FastTT can convert large sparse matrices much faster than the TT-SVD with up to 240X speedup. These experiments also prove that the \textit{Depar} procedure can greatly reduce the TT-rank and thus simplify the computation of the TT-rounding procedure. Like in the first experiment, the TT-cross algorithm is faster when the TT-ranks are low but gets slower when the ranks grow. The results of rTTSVD are obtained by setting the TT-ranks same as those obtained by TT-SVD and FastTT. From the result we can see the rTTSVD algorithm is not as fast as FastTT even if we know the proper TT-ranks. 
    
    If we set $n=40$ with random coefficients, the TT-SVD/TT-cross algorithm cannot produce any result in a reasonable time, while FastTT finishes in 57.5 seconds with a resulting TT-rank of $r=118$.
    
    \subsection{Data of road network}
    
    A directed/undirected graph with $N$ nodes is equivalent to its adjacency matrix $\mat{A}\inR{N\times N}$, which can also be decomposed into an MPO if we properly factorize its order $N=n_1\times\cdots\times n_d$. This may benefit some data mining applications.
    In this experiment we use the undirected graph \textit{roadNet-PA\footnote{Road network of Pennsylvania. \url{http://snap.stanford.edu/data/roadNet-PA.html}}} from SNAP~\cite{snapnets}. Since the graph is fairly large, we only take the subgraph of the first $N$ nodes as our data and preprocess its adjacency matrix by performing  reverse Cuthill-McKee ordering~\cite{George:1981:CSL:578296}. Additionally, different desired accuracy tolerances $\varepsilon$ and the actual relative error are tested in this experiment. The truncation parameters in \cref{alg:tt_rounding} is either set as $\delta_k=\frac{\varepsilon}{\sqrt{p-1}+\sqrt{d-p}}\|\ten{A}\|_F$ or determined by \cref{alg:dynamic}. The results are shown in \cref{tab:exp_road}. The TT-cross/rTTSVD algorithm is not tested because both of them require the TT-ranks to be set the same, which is not possible in this experiment.
    
    \begin{table}[htbp]
        \centering
        \caption{Experimental results on converting the data of roadNet-PA.}
        \label{tab:exp_road}
        \begin{threeparttable}
            \begin{tabularx}{\textwidth}{@{ }@{\extracolsep{\fill}}ccccccc}
                \toprule
                $N$ & $\varepsilon$ & method\tnote{1} & time (s) & speedup & TT-ranks & $\varepsilon_{\text{actual}}$ \\
                \midrule
                \multirow{2}*{$20^3$} & \multirow{2}*{$1\times10^{-14}$} & TT-SVD & 75.4 & -- & 58, 400 & $3.7\times10^{-15}$ \\
                & & FastTT & 14.1 & 5.3X & 58, 400 & $3.3\times10^{-15}$ \\
                \midrule
                \multirow{3}*{$20^3$} & \multirow{3}*{$5\times10^{-1}$} & TT-SVD & 62.2 & -- & 31, 281 & $4.8\times10^{-1}$ \\
                & & FastTT & 11.8 & 5.3X & 31, 281 & $4.8\times10^{-1}$ \\
                & & FastTT$^+$ & 10.4 & 6.0X & 55, 209 & $5.0\times10^{-1}$ \\
                \midrule
                \multirow{2}*{$10^4$} & \multirow{2}*{$1\times10^{-14}$} & TT-SVD & 833 & -- & 28, 1407, 70 & $3.9\times10^{-15}$ \\
                & & FastTT & 23.3 & 34X & 28, 1407, 70 & $4.3\times10^{-15}$ \\
                \midrule
                \multirow{3}*{$10^4$} & \multirow{3}*{$1\times10^{-2}$} & TT-SVD & 839 & -- & 28, 1390, 70 & $5.5\times10^{-3}$ \\
                & & FastTT & 24.4 & 34X & 28, 1395, 70 & $3.8\times10^{-3}$ \\
                & & FastTT$^+$ & 24.2 & 35X & 28, 1377, 70 & $9.8\times10^{-3}$ \\
                \bottomrule
            \end{tabularx}
            \begin{tablenotes}
                \footnotesize
                \item[1] FastTT: use \cref{alg:tt_rounding} for TT-rounding;  FastTT$^{+}$: use \cref{alg:dynamic} for TT-rounding.
            \end{tablenotes}
        \end{threeparttable}
    \end{table}
    
    Again, for sparse graphs our FastTT algorithm is much faster than TT-SVD. Also, the actual relative errors are shown to be less than the given $\varepsilon$. If $\varepsilon$ is small enough, the TT-rank obtained by FastTT is the same as those obtained by TT-SVD. Otherwise, \cref{alg:dynamic} (used in FastTT$^+$) usually produces lower TT-ranks and a little bit higher relative error than \cref{alg:tt_rounding} (used in FastTT) which sets unified truncation parameters.
    
    \section{Conclusions}
    
    This paper analyzes several state-of-the-art algorithms for the computation of the TT decomposition and proposes a faster TT decomposition algorithm for sparse tensors. We prove the correctness and complexity of the algorithm and demonstrate the advantages and disadvantages of each algorithm.
    
    In the subsequent experiments, we have verified the actual performance of each algorithm and confirmed our theoretical analysis. The experimental results also show that our proposed FastTT algorithm for sparse tensors is indeed an algorithm with excellent efficiency and versatility. Previous state-of-the-art algorithms are mainly limited by the tensor size whereas our proposed algorithm is mainly limited by the number of non-zero elements. As a result, the TT decomposition can be computed quickly regardless of the number of dimensions.
    This algorithm therefore is very promising to tackle tensor applications that were previously unimaginable, just like the large-scale use of previous sparse matrix algorithms.
    
    \section{Acknowledgments}
    
    Funding: This work was supported by the National Natural Science Foundation of China [grant number 61872206].
    
    \appendix
    
    \section{Proof of \cref{thm:dyn}}\label{proof:dyn}
    
    \begin{proof}
        
        From the proof of \cref{thm:tt_rounding}, we know that
        \begin{align*} \|\ten{A}-\ten{B}\|_F\le\sqrt{\sum_{k=p}^{d-1}\|\mat{E}_k\|_F^2}+\sqrt{\sum_{k=2}^{p}\|\mat{E}_{k-1}\|_F^2}.
        \end{align*}
        
        Now we are going to use a loop invariant~\cite[pp. 18-19]{cormen2009introduction} to prove the correctness of \cref{alg:dynamic}. The loop invariant for Loop~\ref{line:loop1_start}-\ref{line:loop1_end} is
        \begin{align*}
            \delta_\text{right}^2+\sum_{i=p}^{k-1}\|\mat{E}_i\|_F^2= \frac{d-p}{\sqrt{d-p}+\sqrt{p-1}}\varepsilon^2\|\ten{A}\|_F^2.
        \end{align*}
        
        \textbf{Initialization}: Before the first iteration $k=p$, $\sum_{i=p}^{k-1}\|\mat{E}_i\|_F^2=0$ and $\delta_\text{right}=\frac{\sqrt{d-p}}{\sqrt{d-p}+\sqrt{p-1}}\varepsilon\|\ten{A}\|_F$. Thus the invariant is satisfied.
        
        \textbf{Maintenance}: After each iteration, $\delta_\text{right}^2$ is decreased by $\|\mat{E}_k\|_F^2$ and $\sum_{i=p}^{k-1}\|\mat{E}_i\|_F^2$ is increased by $\|\mat{E}_k\|_F^2$. Thus the invariant remains satisfied.
        
        \textbf{Termination}: When the loop terminates at $k=d$. Again the loop invariant is satisfied. This means that
        \begin{align*}
            \delta_\text{right}^2+\sum_{i=p}^{d-1}\|\mat{E}_i\|_F^2&= \frac{d-p}{\sqrt{d-p}+\sqrt{p-1}}\varepsilon^2\|\ten{A}\|_F^2\\
            \Rightarrow\quad\sqrt{\sum_{k=p}^{d-1}\|\mat{E}_k\|_F^2}&\le\frac{\sqrt{d-p}}{\sqrt{d-p}+\sqrt{p-1}}\varepsilon\|\ten{A}\|_F.
        \end{align*}
        
        Similarly we can prove that
        \begin{align*}
            \sqrt{\sum_{k=2}^{p}\|\mat{E}_{k-1}\|_F^2}\le\frac{\sqrt{p-1}}{\sqrt{d-p}+\sqrt{p-1}}\varepsilon\|\ten{A}\|_F,
        \end{align*}
        is satisfied after Loop~\ref{line:loop2_start}-\ref{line:loop2_end}.
        
        Thus
        \begin{align*} \|\ten{A}-\ten{B}\|_F&\le\sqrt{\sum_{k=p}^{d-1}\|\mat{E}_k\|_F^2}+\sqrt{\sum_{k=2}^{p}\|\mat{E}_{k-1}\|_F^2}.\\
            &\le\frac{\sqrt{d-p}}{\sqrt{d-p}+\sqrt{p-1}}\varepsilon\|\ten{A}\|_F+\frac{\sqrt{p-1}}{\sqrt{d-p}+\sqrt{p-1}}\varepsilon\|\ten{A}\|_F\\
            &=\varepsilon\|\ten{A}\|_F.
        \end{align*}
        
    \end{proof}

    \bibliographystyle{unsrt}
    \bibliography{refs}
    
\end{document}